 \newcommand{\com}[1]{{\color{green} #1}}
 \theoremstyle{plain}
 \newtheorem{thm}{Theorem}[section]
 \newtheorem*{thm*}{Theorem}
 \newtheorem*{cor*}{Corollary}
 \newtheorem*{defn*}{Definition}
 \newtheorem{lem}[thm]{Lemma}
 \newtheorem{cor}[thm]{Corollary}
 \newtheorem{claim}[thm]{Claim}
 \newtheorem*{claim*}{Claim}
 \theoremstyle{definition}
 \newtheorem{defn}[thm]{Definition}
 \newtheorem{ex}[thm]{Example}
 \newtheorem{rem}[thm]{Remark}
 \theoremstyle{remark}
 \newcommand{\fka}{\mathfrak{a}}
 \newcommand{\fkb}{\mathfrak{b}}
 \newcommand{\fkm}{\mathfrak{m}}
 \newcommand{\fkp}{\mathfrak{p}}
 \newcommand{\fkq}{\mathfrak{q}}
  \def\I{\mathrm{I}}
 \def\gr{\mathrm{g}}
 \def\e{\mathrm{e}}
 \def\rr{\mathrm{r}}
 \def\m{\fkm}
  \def\Ann{\operatorname{\mathrm{Ann}}}
  \def\Ass{\operatorname{\mathrm{Ass}}}
  \def\Assh{\operatorname{\mathrm{Assh}}}
  \def\ir{\operatorname{\mathrm{ir}}}
  \def\Hom{\operatorname{\mathrm{Hom}}}
\begin{document}
 	
 	\title{On the sectional genera and Cohen-Macaulay rings }

	\author[S. Kumashiro]{Shinya Kumashiro}
	  	\address{National Institute of Technology, Oyama College, 771 Nakakuki, Oyama, Tochigi, 323-0806, Japan}	
 	\email{skumashiro@oyama-ct.ac.jp}
	
 	\author[H. L. Truong]{Hoang Le Truong}
 	\address{Institute of Mathematics, VAST, 18 Hoang Quoc Viet Road, 10307
 		Hanoi, Vietnam}
 	\address{Thang Long Institute of Mathematics and Applied Sciences, Hanoi, Vietnam}
 	\email{hltruong@math.ac.vn\\
 		truonghoangle@gmail.com}
 	
 	\author[H. N. Yen]{Hoang Ngoc Yen}
 	\address{Institute of Mathematics, VAST, 18 Hoang Quoc Viet Road, 10307
 		Hanoi, Vietnam}
 	\address{The Department of Mathematics, Thai Nguyen University of education.
 		20 Luong Ngoc Quyen Street, Thai Nguyen City, Thai Nguyen Province, Vietnam}
 	\email{hnyen91@gmail.com}

 	\thanks{{\it Key words and phrases:} sectional genera, Hilbert coefficients, Cohen-Macaulay, Gorenstein, quasi-Buchsbaum.
 		\endgraf
 		{\it 2020 Mathematics Subject Classification:}
 		13D40, 13H10, 13H15.\\
 		The first author was supported by JSPS KAKENHI Grant Number JP21K13766.
		The second author was supported by NCXS02.01/22-23.
 		The last author was partially supported by  Grant number  ICRTM02-2020.05, awarded in the internal grant competition of International Center for Research and Postgraduate Training in Mathematics, Hanoi.
 	}

 	\date{}

 	\begin{abstract}
 	We explore the behavior of the sectional genera of  certain primary ideals in Noetherian local rings. In this paper, we provide characterizations of a Cohen-Macaulay local ring in terms of the sectional genera, the Cohen-Macaulay type, and the second Hilbert coefficients for certain primary ideals. We also characterize Gorenstein rings and quasi-Buchsbaum rings in terms of the sectional genera for certain primary ideals.
 	\end{abstract}
 	
 	 	 	\maketitle
 	
 	\section{Introduction}\label{section1}
 	%\com{in Introduction, we may not need to introduce $M$?}
 	
 	Let $(R, \fkm)$ be a commutative Noetherian local ring of dimension $d$, where $\fkm$ is the maximal ideal.
%Let $(R, \fkm, k)$ be a commutative Noetherian local ring of dimension $d$, where $\fkm$ is the maximal ideal and $k = R/\fkm$ is the residue field of $R$. 
%Let $M$ be a finitely generated $R$-module of dimension $s$. 
 	Let $I$ be an $\fkm$-primary ideal of $R$. It is well-known  that there are integers $\e_i(I,R)$, called the {\it Hilbert coefficients} of $M$ with respect to $I$, such that 
 	\begin{eqnarray*}
 		\ell_R(R/{I^{n+1}})=\e_0(I,R) \binom{n+d}{d}-\e_1(I,R) \binom{n+d-1}{d-1}+\cdots+(-1)^d \e_d(I,R)
 	\end{eqnarray*}
 	for  all $n \gg 0$. Here $\ell_R(N)$ denotes the length of an $R$-module $N$. The leading coefficient $\e_0(I,R)$ is called {\it the multiplicity} of $R$ with respect to $I$, and $\e_1(I,R)$ is named by W. V. Vasconcelos (\cite{Vas08}) as the {\it Chern number} of $R$ with respect to $I$. 
 	
 	In 1987,  A. Ooishi (\cite{Ooi87}) introduced the notion of sectional genera in commutative rings. Let 
 	$$ {\rm sg}(I,R) = \ell_R(R/I) - \e_0(I,R) + \e_1(I,R)$$
 	and call it the \textit{sectional genera} for $R$ with respect to $I$.  For the notion of sectional genera in Cohen-Macaulay local rings, there is general recognition that the sectional genera of $R$ with respect to $I$ controls the depth of the associated graded ring of $I$, and determines the Hilbert-Samuel function of $I$. Indeed, the results along this line of investigation can be found in, for example, \cite{Hun87}, \cite{Sal92}, \cite{Sal93}, \cite{Ito95}, \cite{CPR05}.
 	%the sectional genera of $R$ with respect to certain $\fkm$-primary ideals $I$  controls the depth  	of the associated graded ring of $I$, and at the same time determines 	its Hilbert-Samuel function. 
 	 On the other hand, for non-Cohen-Macaulay local rings, not so much is known about the sectional genera. One progress is that S. Goto and K. Ozeki (\cite{GoO16}) gave a criterion for a certain equality of the sectional genera of parameter ideals for modules in 2016. On the other hand, even how the sectional genera characterizes the ring itself has not yet been investigated.
 	
 The purpose of our paper is to study the sectional genera of $\fkm$-primary ideals, provided $R$ is unmixed and a homomorphic image of a Cohen-Macaulay local ring. We especially focus our attention on $C$-parameter ideals. We note here only that the notion of $C$-parameter ideals is a special kind of parameter ideals of $R$, and $C$-parameter ideals always exist in our assumption. See Section \ref{section2} for the precise definition.
 
 Let us recall and fix the terminology and the notations to state our results. The index of reducibility of $C$-parameter ideals $\fkq$ of $R$ is called the \textit{stable value} of $R$ and denoted by $\mathcal{N}(R)$ (see \cite{CuQ20}). We denote by $\rr(R)$ the {\it Cohen-Macaulay type} $\dim_R((0):_{H_\fkm^d(R)} \fkm)$ of $R$, where $H_\fkm^d(R)$ denotes the $d$th local cohomology of $R$, and $\gr(R)$ the {\it g-invariant} of $R$ (see Definition \ref{sur}). Using these notations, our results are stated as follows.

	\begin{thm}[Theorem \ref{thmgg}, \ref{thmggn}, Corollary \ref{corg}]\label{main1}
	Suppose that $R$ is unmixed with $d=\dim R \ge 2$, that is, $\dim \widehat{R}/\fkp =d$ for all $p \in\Ass R$ where $\widehat{R}$ denotes the completion of $R$. Then for all parameter ideals $\fkq\subset \fkm^{\gr(R)}$, we have 
		$$ \rr(R) - \mathcal{N}(R)\le {\rm sg}(\fkq: \fkm, R) \le {\rm sg}(\fkq, R).$$
		Furthermore, for each inequality, it is to be equal if and only if $R$ is a Cohen-Macaulay local ring.
	\end{thm}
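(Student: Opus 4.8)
The plan is to fix a parameter ideal $\fkq\subseteq\fkm^{\gr(R)}$ and to analyse the $\fkm$-primary ideal $I:=\fkq:\fkm$, for which $\ir(\fkq)=\ell_R(I/\fkq)$. First I would record the structural input coming from the hypotheses together with Definition~\ref{sur}: because $R$ is unmixed and a homomorphic image of a Cohen--Macaulay ring, the condition $\fkq\subseteq\fkm^{\gr(R)}$ guarantees that $\fkq$ is a reduction of $I$ with $I^{n+1}=\fkq^nI=\fkq^{n+1}:\fkm$ for every $n\ge 0$ (so $\e_0(I,R)=\e_0(\fkq,R)$), and that the canonical map $(\fkq:\fkm)/\fkq\twoheadrightarrow (0:_{H^d_\fkm(R)}\fkm)$ is surjective, whence $\ir(\fkq)\ge\rr(R)$. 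From the first point, $\ell_R(R/I^{n+1})=\ell_R(R/\fkq^{n+1})-\ir(\fkq^{n+1})$, where I write $\ir(\fka):=\ell_R((\fka:\fkm)/\fka)$. Since $n\mapsto\ir(\fkq^{n+1})$ is eventually a polynomial in $n$ of degree at most $d-1$, a comparison of Hilbert polynomials gives the identity
\[
{\rm sg}(\fkq:\fkm,R)={\rm sg}(\fkq,R)-\ir(\fkq)+\iota(\fkq),
\]
where $\iota(\fkq)\ge 0$ denotes the coefficient of $\binom{n+d-1}{d-1}$ in that polynomial.

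For the upper estimate ${\rm sg}(\fkq:\fkm,R)\le{\rm sg}(\fkq,R)$ I would bound $\iota(\fkq)$ by $\ir(\fkq)$. Since $\fkm I\subseteq\fkq$, every $\fkq^nI/\fkq^{n+1}$ is an $R/\fkm$-vector space, and the multiplication map realises it as a homomorphic image of $F_\fkq(R)_n\otimes_{R/\fkm}(I/\fkq)$, where $F_\fkq(R)=\bigoplus_{n\ge 0}\fkq^n/\fkm\fkq^n$ is the fibre cone of $\fkq$; as $\fkq$ is a parameter ideal, $F_\fkq(R)$ is a polynomial ring over $R/\fkm$ in $d$ variables, so $\ir(\fkq^{n+1})=\ell_R(\fkq^nI/\fkq^{n+1})\le\binom{n+d-1}{d-1}\ir(\fkq)$, and therefore $\iota(\fkq)\le\ir(\fkq)$. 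This proves the right-hand inequality. Equality forces the displayed surjection $F_\fkq(R)_n\otimes_{R/\fkm}(I/\fkq)\twoheadrightarrow\fkq^nI/\fkq^{n+1}$ to be bijective for $n\gg0$; feeding this back into $\ell_R(\fkq^n/\fkq^{n+1})=\ell_R(\fkq^nI/\fkq^{n+1})+\ell_R(\fkq^n/\fkq^nI)$ rigidifies the Hilbert function of $\{R/\fkq^{n+1}\}_n$ enough to conclude $\depth R=d$, i.e.\ that $R$ is Cohen--Macaulay; the converse is the classical computation that over a Cohen--Macaulay ring $I^2=\fkq I$, $\e_1(I,R)=\ell_R(I/\fkq)$ and $\e_1(\fkq,R)=0$, so that ${\rm sg}(\fkq:\fkm,R)={\rm sg}(\fkq,R)=0$.

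For the lower estimate I would reduce to the case of a $C$-parameter ideal $\fkq_0$, using the $C$-parameter machinery of Cuong--Quy and Morales--Quy: there one has $\ir(\fkq_0)=\mathcal{N}(R)$, one computes $\iota(\fkq_0)=\sum_{i=1}^{d}\binom{d-1}{i-1}\ell_R(0:_{H^i_\fkm(R)}\fkm)$ and $\mathcal{N}(R)=\sum_{i=0}^{d}\binom{d}{i}\ell_R(0:_{H^i_\fkm(R)}\fkm)$, and one knows ${\rm sg}(\fkq_0,R)\ge 0$. Substituting into the displayed identity and using $\ell_R(0:_{H^0_\fkm(R)}\fkm)=0$ (as $d\ge 2$ and $R$ is unmixed) yields, after a short binomial computation,
\[
{\rm sg}(\fkq_0:\fkm,R)-\bigl(\rr(R)-\mathcal{N}(R)\bigr)={\rm sg}(\fkq_0,R)+\sum_{i=1}^{d-1}\binom{d-1}{i-1}\ell_R(0:_{H^i_\fkm(R)}\fkm)\ \ge\ 0,
\]
which is the desired inequality for $\fkq_0$, with equality precisely when ${\rm sg}(\fkq_0,R)=0$ and $(0:_{H^i_\fkm(R)}\fkm)=0$ for $0\le i<d$, that is, precisely when $R$ is Cohen--Macaulay; the same binomial identity shows ${\rm sg}(\fkq_0,R)-{\rm sg}(\fkq_0:\fkm,R)=\sum_{i=1}^{d-1}\binom{d-1}{i}\ell_R(0:_{H^i_\fkm(R)}\fkm)$, so the equality case of the upper estimate is again exactly Cohen--Macaulayness. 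To pass to an arbitrary $\fkq\subseteq\fkm^{\gr(R)}$ I would compare $\fkq$ with a $C$-parameter ideal contained in it and invoke the stability of $\ir(\fkq^{n+1})$ and of the difference $\ell_R(R/\fkq)-\e_0(\fkq,R)$ for deep parameter ideals to propagate the inequality, and to propagate equality back to the $C$-parameter link and hence to $R$ being Cohen--Macaulay; the converse implication is again the computation ${\rm sg}(\fkq:\fkm,R)=0=\rr(R)-\mathcal{N}(R)$ valid over any Cohen--Macaulay ring.

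The step I expect to be the main obstacle is precisely this reduction to $C$-parameter ideals in the general \emph{unmixed} setting, where the modules $H^i_\fkm(R)$ may fail to have finite length: controlling the invariants ${\rm sg}(\fkq,R)$, $\iota(\fkq)$ and $\ir(\fkq)$ as $\fkq$ runs through $\fkm^{\gr(R)}$, establishing the monotonicity/stability that makes the comparison work, and---hand in hand with that---upgrading the two numerical equalities to the structural statement ``$R$ is Cohen--Macaulay'' are where the $\fkb(R)$-ideal technology and the theory of stable values carry the argument.
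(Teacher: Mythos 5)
Your proposal has genuine gaps, and the two most serious ones sit exactly where the whole argument pivots. First, your master identity ${\rm sg}(\fkq:\fkm,R)={\rm sg}(\fkq,R)-\ir(\fkq)+\iota(\fkq)$ rests on the claim that $\fkq\subseteq\fkm^{\gr(R)}$ forces $(\fkq:\fkm)^{n+1}=\fkq^{n}(\fkq:\fkm)=\fkq^{n+1}:\fkm$ for all $n\ge 0$. That is a Goto--Sakurai-type statement ($I^2=\fkq I$ for $I=\fkq:\fkm$), which is delicate even for Buchsbaum rings and is simply not available for a general unmixed ring; the paper never uses it. All the paper extracts from $\fkm(\fkq:\fkm)\subseteq\fkq$ is the single equality $\e_0(\fkq:\fkm,R)=\e_0(\fkq,R)$ (via Goto--Nishida and Swanson--Huneke, using $\e_0(\fkm,R)>1$), and it then computes $\e_1(\fkq:\fkm,R)$ by an entirely different route. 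Second, your formulas for $\iota(\fkq_0)$ and $\mathcal{N}(R)$ in terms of the socle lengths $r_i(R)=\ell\bigl((0):_{H^i_\fkm(R)}\fkm\bigr)$ are results about \emph{generalized Cohen--Macaulay} rings and standard parameter ideals; an unmixed ring of dimension $d\ge 3$ need not be generalized Cohen--Macaulay, so you cannot apply them to $R$ directly. You flag this as ``the main obstacle'' but do not resolve it, and resolving it is the actual content of the paper's proof: one cuts down by $x_1,\dots,x_{d-2}$ from a $C$-system of parameters, uses superficiality to see that ${\rm sg}$, $\e_1$, and $\mathcal{N}$ are preserved in passing to $A=R/(x_1,\dots,x_{d-2})$, observes that $A$ is generalized Cohen--Macaulay of dimension $2$ (Lemma~\ref{lemun}~$i)$), performs the $r_i$/$h_i$ computation there via Lemma~\ref{fact2}, and then --- crucially --- lifts Cohen--Macaulayness of $A$ back to $R$ by Lemma~\ref{lemun}~$ii)$. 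None of this descent-and-ascent machinery appears in your proposal.

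Two further points. Your treatment of the equality case of the upper bound (``rigidifies the Hilbert function of $\{R/\fkq^{n+1}\}_n$ enough to conclude $\depth R=d$'') is not an argument: bijectivity of the fibre-cone surjection for $n\gg 0$ does not by itself yield depth, and in the paper the equality case is instead read off as $r_0(A)=r_1(A)=0$ in the $2$-dimensional quotient, which makes $A$ Cohen--Macaulay and hence $R$ as well. Finally, the reduction from arbitrary parameter ideals inside $\fkm^{\gr(R)}$ to $C$-parameter ideals, which you defer to unspecified ``stability'' properties, is not carried out; note that the paper's own proofs (Theorems~\ref{thmgg}, \ref{thmggn}, Corollary~\ref{corg}) are stated and proved for $C$-parameter ideals, so this step is not something you can borrow from the cited machinery without additional work. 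The one part of your outline that is sound and parallel to the paper is the use of the $g$-invariant to get $\ir(\fkq)\ge\rr(R)$; the rest needs the dimension-two reduction to go through.
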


%%%

\begin{thm}[Theorem \ref{thmee}, \ref{thmgee}, Corollary \ref{corge}] \label{main3}
	Suppose that $R$ is unmixed with $d=\dim R  \ge 2$. 
	Then for all parameter ideals $\fkq\subset \fkm^{\gr(R)}$, we have 
	\[
	\e_2(\fkq:\fkm, R) \leq \e_2(\fkq, R) \leq {\rm sg}(\fkq: \fkm, R) +  \mathcal{N}(R) - \rr(R).
	\]
	Furthermore, for each inequality, it is to be equal if and only if $R$ is a Cohen-Macaulay local ring.

\end{thm}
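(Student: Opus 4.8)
The plan is to prove the two displayed inequalities separately --- these are Theorems~\ref{thmee} and~\ref{thmgee} --- and then to concatenate them, which is Corollary~\ref{corge}. Throughout, set $I:=\fkq:\fkm$; since passing to the completion $\widehat R$ leaves $\e_i(-,R)$, ${\rm sg}(-,R)$, $\mathcal{N}(R)$, $\rr(R)$ and the Cohen--Macaulay property unchanged, we may assume $R$ has a canonical module. The first step is to record the structural effect of the hypothesis $\fkq\subseteq\fkm^{\gr(R)}$: by the defining property of the $g$-invariant (Definition~\ref{sur}) one has $\fkm I=\fkm\fkq$, which forces $I^{2}=\fkq I$; hence $\fkq$ is a minimal reduction of $I$ with reduction number one, $\e_0(I,R)=\e_0(\fkq,R)$, and $I^{n+1}=\fkq^{n}I$ for all $n\ge1$. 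Moreover $\fkm I\subseteq\fkq$, so each $\fkq^{n}I/\fkq^{n+1}$ is a vector space over $R/\fkm$; in particular $\ell_R(I/\fkq)$ is the index of reducibility of $\fkq$, which for $\fkq$ in this range equals $\mathcal{N}(R)$, so that $\ell_R(R/I)=\ell_R(R/\fkq)-\mathcal{N}(R)$. The exact sequences $0\to\fkq^{n}I/\fkq^{n+1}\to R/\fkq^{n+1}\to R/I^{n+1}\to0$ then give
\[
\ell_R(R/I^{n+1})=\ell_R(R/\fkq^{n+1})-\ell_R(\fkq^{n}I/\fkq^{n+1})\qquad(n\ge1),
\]
which is the source of all the Hilbert-coefficient comparisons below.

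For the first inequality $\e_2(I,R)\le\e_2(\fkq,R)$, I would compare the degree-$(d-2)$ coefficients of the Hilbert polynomials on the two sides of the displayed identity; this identifies $\e_2(\fkq,R)-\e_2(I,R)$, up to lower-order terms, with a Hilbert coefficient of the graded module $\bigoplus_{n\ge1}\fkq^{n}I/\fkq^{n+1}$, which is a quotient of a direct sum of $\mathcal{N}(R)$ copies of the fiber cone $(R/\fkm)[T_1,\dots,T_d]$. A sign analysis of this module --- this is where the hypothesis $d\ge2$ enters, together with the available non-negativity properties for the Hilbert coefficients of modules of this shape --- yields $\e_2(\fkq,R)-\e_2(I,R)\ge0$.

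For the second inequality $\e_2(\fkq,R)\le{\rm sg}(I,R)+\mathcal{N}(R)-\rr(R)$, Theorem~\ref{main1} is the engine. Using $\e_0(I,R)=\e_0(\fkq,R)$ and $\ell_R(R/I)=\ell_R(R/\fkq)-\mathcal{N}(R)$ one rewrites
\[
{\rm sg}(I,R)+\mathcal{N}(R)-\rr(R)=\bigl(\ell_R(R/\fkq)-\e_0(\fkq,R)\bigr)+\e_1(I,R)-\rr(R),
\]
and must then bound $\e_2(\fkq,R)$ above by this. For that one feeds in the expression for $\e_1(I,R)$ coming from the reduction-number-one structure, the expansions of $\e_2(\fkq,R)$ and ${\rm sg}(\fkq,R)$ in terms of the stable Hilbert-coefficient data of $C$-parameter ideals established earlier, and finally the inequality ${\rm sg}(I,R)\ge\rr(R)-\mathcal{N}(R)$ of Theorem~\ref{main1}, i.e.\ that the right-hand side above is nonnegative. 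For the equality statements, the ``if'' direction is immediate --- $\e_2(I,R)$, $\e_2(\fkq,R)$, ${\rm sg}(I,R)$ and $\mathcal{N}(R)-\rr(R)$ all vanish when $R$ is Cohen--Macaulay --- while for the ``only if'' direction, equality in either inequality forces equality in every step used to derive it, and in particular equality in Theorem~\ref{main1} (either ${\rm sg}(I,R)={\rm sg}(\fkq,R)$ or ${\rm sg}(I,R)=\rr(R)-\mathcal{N}(R)$), whose equality case gives that $R$ is Cohen--Macaulay.

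The hard part will be the Hilbert-coefficient bookkeeping in the two middle steps: pinning down exactly how the socle operation $\fkq\mapsto\fkq:\fkm$ shifts the second Hilbert coefficient --- equivalently, controlling $\bigoplus_{n\ge1}\fkq^{n}I/\fkq^{n+1}$ precisely enough --- and, in the equality analysis, verifying that the chain of inequalities genuinely funnels into the equality case of Theorem~\ref{main1} rather than collapsing somewhere else. A recurring subtlety is that it is $C$-parameter ideals, not arbitrary deep parameter ideals, for which the reducibility index and the Hilbert coefficients stabilise to $\mathcal{N}(R)$ and to the stable values; one must check that $\fkq\subseteq\fkm^{\gr(R)}$ already places $\fkq$ in that regime, which is precisely what the $g$-invariant is designed to guarantee.
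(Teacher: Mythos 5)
Your route is genuinely different from the paper's: the paper never analyses the filtration $\{\fkq^{n}I/\fkq^{n+1}\}$ or the fiber cone at all. Instead it chooses a $C$-system of parameters $x_1,\dots,x_d$ generating $\fkq$, passes to $A=R/(x_1,\dots,x_{d-2})$ (a generalized Cohen--Macaulay ring of dimension $2$ by Lemma \ref{lmc} and Lemma \ref{lemun}), uses Lemma \ref{lmsup} to transport $\e_2$ and ${\rm sg}$ down to $A$, and then reads off everything from the closed formulas of Lemma \ref{fact2}: one gets $\e_2(\fkq,R)-\e_2(\fkq:\fkm,R)=r_1(A)\ge 0$ and ${\rm sg}(\fkq:\fkm,R)+\mathcal{N}(R)-\rr(R)-\e_2(\fkq,R)\ge r_1(A)+\ell((0):_{R_{d-3}}x_{d-2})\ge 0$, and in either equality case $r_1(A)=0$ forces $A$, hence $R$, to be Cohen--Macaulay via Lemma \ref{lemun}. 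That mechanism is what makes both the inequalities and their equality cases come out in one stroke.

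Your proposal has two genuine gaps. First, the structural input you extract from $\fkq\subseteq\fkm^{\gr(R)}$ --- that $\fkm I=\fkm\fkq$, hence $I^{2}=\fkq I$ and $I^{n+1}=\fkq^{n}I$ --- is not a consequence of Definition \ref{sur}. The $g$-invariant only guarantees surjectivity on socles of the maps $H^p(\underline{x};R)\to H^p_\fkm(R)$; the equality $I^2=\fkq I$ is a Goto--Sakurai type theorem known for Buchsbaum rings, not for arbitrary unmixed rings, and the paper nowhere uses or proves it (in Example \ref{ex25} the analogous identity is verified only by explicit computation in one ring). Without $I^{n+1}=\fkq^{n}I$ your basic identity $\ell_R(R/I^{n+1})=\ell_R(R/\fkq^{n+1})-\ell_R(\fkq^{n}I/\fkq^{n+1})$ fails, and with it the entire comparison of Hilbert coefficients. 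Second, even granting that identity, the ``sign analysis'' asserting $\e_2(\fkq,R)-\e_2(I,R)\ge 0$ is not an argument: second Hilbert coefficients of graded quotients of $F^{\mathcal{N}(R)}$, $F$ the fiber cone, carry no definite sign in general, and --- more seriously --- the equality case of the first inequality is untreated. Your claim that equality ``funnels into Theorem \ref{main1}'' is only plausible for the second inequality, whose derivation invokes ${\rm sg}(I,R)\ge\rr(R)-\mathcal{N}(R)$; the first inequality $\e_2(I,R)=\e_2(\fkq,R)$ is derived purely from the fiber-cone module and gives you no access to Theorem \ref{main1}, so you have no way to conclude Cohen--Macaulayness there. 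The paper's identification of the defect with $r_1(A)$ is exactly what supplies that missing implication.
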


\begin{comment}
Furthermore, we obtain the following main results.

\begin{thm}[Theorem \ref{thmee}, \ref{thmgee}] \label{main3}
	Assume that $R$ is unmixed with $\dim R = d \ge 2$. 
	Then the following statements are equivalent.
	\begin{enumerate}[$i)$]
		\item $R$ is Cohen--Macaulay.
		\item There exists a $C$-parameter ideal $\fkq$ of $R$ such that 
		$$ e_2(\fkq:\fkm, R) \geq e_2(\fkq, R).$$
		%\item There exists a $C$-parameter ideal $\fkq \subseteq \fkm^{\gr(R)}$ of $R$ such that  
		%		$$ e_2(\fkq : \fkm, R) \ge {\rm sg}(\fkq: \fkm, R) +  \mathcal{N}(R) - \rr(R).$$
		\item There exists a $C$-parameter ideal $\fkq \subseteq \fkm^{\gr(R)}$ of $R$ such that  
		$$ e_2(\fkq, R) \ge {\rm sg}(\fkq: \fkm, R) +  \mathcal{N}(R) - \rr(R).$$
		%		\item There exists a $C$-parameter ideal $\fkq \subseteq \fkm^{\gr(R)}$ of $R$ such that  
		%		$$ e_2(\fkq, R) \ge {\rm sg}(\fkq, R) +  \mathcal{N}(R) - \rr(R).$$

	\end{enumerate}
	
\end{thm}
From above main result, we have the following results.
\begin{cor}
	Assume that $R$ is unmixed with $\dim R = d \ge 2$. Then for all parameter ideals $\fkq\subset \fkm^{\gr(R)}$ we have 
	$$e_2(\fkq:\fkm, R) \leq e_2(\fkq, R) \leq {\rm sg}(\fkq: \fkm, R) +  \mathcal{N}(R) - \rr(R).$$
\end{cor}

\com{
	How about connecting Theorem 1.5 and Corollary 1.6 in the same way as 1.1 and 1.2? \\
	Are there general inequality between $e_1(\fkq, R)$ and $e_2(\fkq, R)$?
}
\end{comment}
%%%

%Next we explore the condition under which generalized Northcott's inequality for non-Cohen-Macaulay rings is equal when $I=\fkq:\fkm$. 
Next we explore the condition under which generalized Northcott's inequality is equal when $I=\fkq:\fkm$. 
Recall that for a Cohen-Macaulay local ring $R$ and an $\fkm$-primary ideal $I$ of $R$, Northcott’s inequality holds (see \cite{Nor60}): 
\[ 
{\rm sg}(I, R) \geq 0.
\]
%$$ {\rm sg}(I, R)= \ell_R(R/I) - \e_0(I, R) + \e_1(I, R) \geq 0.$$ 
%for all  $\fkm$-primary ideals $I$ of $R$.
Then, S. Goto and K. Nishida (\cite{GoN03}) generalizes  Northcott's inequality without assuming that $R$ is a Cohen--Macaulay ring. In particular, they show
$$ {\rm sg}(I, R)\geq \e_1(\fkq, R),$$
where $\fkq$ is a minimal reduction of $I$.  In the present study, we characterize rings such that the above inequality is to be an equal.
Actually, the next main result is stated as follows.
\begin{thm} \label{main2}
Suppose that $R$ is a non-regular unmixed local ring with $d=\dim R \ge 2$. 
Then the following statements are equivalent.
	\begin{enumerate}[$i)$]
\item $R$ is a quasi-Buchsbaum ring.
\item There exists a $C$-parameter ideal $\fkq$ of $R$ such that 
$$ {\rm sg}(\fkq:\fkm, R) = \e_1(\fkq, R).$$
	\end{enumerate}

\end{thm}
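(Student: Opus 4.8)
This is the equality case of the generalized Northcott inequality of Goto and Nishida \cite{GoN03}: since a $C$-parameter ideal $\fkq$ is a minimal reduction of $\fkq:\fkm$, one has ${\rm sg}(\fkq:\fkm,R)\ge \e_1(\fkq,R)$ unconditionally, and (ii) asks exactly when equality holds. My plan is to compute the difference ${\rm sg}(\fkq:\fkm,R)-\e_1(\fkq,R)$ for an arbitrary $C$-parameter ideal $\fkq$ and to express it as a nonnegative integer combination of lengths of finite-length modules built from the local cohomology modules $H^i_\fkm(R)$, $1\le i\le d-1$, each summand measuring the failure of $\fkm H^i_\fkm(R)$ to vanish. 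Such a combination is zero precisely when $\fkm H^i_\fkm(R)=0$ for all $i<d$, which, since $R$ is unmixed and hence $H^0_\fkm(R)=0$, is precisely the quasi-Buchsbaum condition. Because every invariant of $C$-parameter ideals entering the formula is independent of the chosen ideal (see Section~\ref{section2}), both implications follow at once: (ii)$\Rightarrow$(i) by forcing each summand to vanish, and (i)$\Rightarrow$(ii) by taking any $C$-parameter ideal of $R$.

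To set up the computation, write ${\rm sg}(\fkq:\fkm,R)=\ell_R(R/(\fkq:\fkm))-\e_0(\fkq:\fkm,R)+\e_1(\fkq:\fkm,R)$ and dispose of the outer terms: $\e_0(\fkq:\fkm,R)=\e_0(\fkq,R)$ because $\fkq$ is a reduction of $\fkq:\fkm$, and $\ell_R(R/(\fkq:\fkm))=\ell_R(R/\fkq)-\mathcal{N}(R)$ because $(\fkq:\fkm)/\fkq$ is an $R/\fkm$-vector space of dimension $\ir(\fkq)=\mathcal{N}(R)$. Hence
\[
{\rm sg}(\fkq:\fkm,R)-\e_1(\fkq,R)=\bigl(\ell_R(R/\fkq)-\e_0(\fkq,R)\bigr)-\mathcal{N}(R)+\bigl(\e_1(\fkq:\fkm,R)-\e_1(\fkq,R)\bigr).
\]
The first bracket is the usual non-Cohen-Macaulay defect of $\fkq$, which for a $C$-system of parameters is controlled by the $H^i_\fkm(R)$, $i<d$, through the same auxiliary results used for Theorems \ref{main1} and \ref{main3}. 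For the last bracket I would analyse the $\fkq$-adic filtration of $\fkq:\fkm$: for a $C$-parameter ideal the powers $(\fkq:\fkm)^{n+1}$ lie between $\fkq^n(\fkq:\fkm)$ and that submodule enlarged by a correction term built from the $H^i_\fkm(R)$, which pins $\e_1(\fkq:\fkm,R)$ down relative to $\e_1(\fkq,R)$. On substitution, the $\mathcal{N}(R)$ from the length term and the ``full socle'' part of the correction cancel, leaving the asserted nonnegative sum.

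I would establish the resulting identity by induction on $d\ge2$. The base case $d=2$ is a direct computation of the Hilbert functions involved, using that the reduction number of $\fkq:\fkm$ over $\fkq$ is small for a $C$-parameter ideal (so the length formulas of \cite{GoN03} for $\e_0,\e_1$ apply) together with the known value of $\e_1(\fkq,R)$. For the inductive step I would take $x$ to be the last element of a $C$-system of parameters generating $\fkq$, pass to $\overline R=R/xR$, and use the standard behaviour of $C$-systems of parameters, of $\mathcal{N}(-)$, and of the modules $H^i_\fkm(-)$ under such a hyperplane section --- together with a comparison of $\overline\fkq:\overline\fkm$ in $\overline R$ with the image of $\fkq:\fkm$ --- to see that both sides of the identity specialise compatibly. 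For (i)$\Rightarrow$(ii) alone the induction can be avoided, since a quasi-Buchsbaum ring is generalized Cohen-Macaulay and both sides are then computed outright from the finite-length modules $H^i_\fkm(R)$.

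The principal obstacle is the analysis of $\e_1(\fkq:\fkm,R)-\e_1(\fkq,R)$ when $R$ is merely unmixed rather than generalized Cohen-Macaulay: the customary superficial-element and associated-graded arguments are unavailable and must be replaced by the $C$-parameter machinery, and one must check that every non-Cohen-Macaulay correction term --- including the quantity $\mathcal{N}(R)-\rr(R)$ visible in Theorems \ref{main1} and \ref{main3} --- cancels down to exactly the stated sum. One also has to face the a priori possibility that some $H^i_\fkm(R)$ with $i<d$ is not of finite length; the identity forces such finiteness (hence that $R$ is generalized Cohen-Macaulay) once the equality in (ii) is assumed, and proving the identity at that level of generality is the delicate point. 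The non-regularity hypothesis is used to keep $\fkq:\fkm$ a proper $\fkm$-primary ideal with controlled reduction number throughout.
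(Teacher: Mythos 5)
Your overall framing is right --- (i)$\Rightarrow$(ii) is a direct computation because a quasi-Buchsbaum ring is generalized Cohen--Macaulay, so Lemma~\ref{change} and Lemma~\ref{fact2} give both sides outright, and this is exactly what the paper does. The gap is in (ii)$\Rightarrow$(i). Your plan rests on a closed-form identity expressing ${\rm sg}(\fkq:\fkm,R)-\e_1(\fkq,R)$, for an \emph{arbitrary} unmixed $R$, as a nonnegative combination of lengths built from $H^i_\fkm(R)$, $1\le i\le d-1$. Such an identity cannot even be written down at that level of generality: for $d\ge 3$ an unmixed ring that is not generalized Cohen--Macaulay has some $H^i_\fkm(R)$ ($1<i<d$) of infinite length, and the Hilbert coefficients of $\fkq$ and $\fkq:\fkm$ are then not governed by local cohomology formulas at all. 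You flag this yourself as ``the delicate point'' but offer no way around it, and the proposed induction does not supply one --- the behaviour of $H^i_\fkm(-)$ under a hyperplane section $R\to R/xR$ is controlled only by a long exact sequence, $R/xR$ need not be unmixed, and your claim that all the invariants entering the formula are independent of the chosen $C$-parameter ideal holds only in the generalized Cohen--Macaulay case.

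The paper avoids the problem by two ideas absent from your proposal. First, instead of proving an identity over $R$, it cuts all the way down to $A=R/(x_1,\dots,x_{d-2})$ of dimension $2$; Lemma~\ref{lemun}~$i)$ (using unmixedness and the $C$-system of parameters) shows $A$ \emph{is} generalized Cohen--Macaulay, the invariants pass to $A$ by Lemmas~\ref{lmsup} and~\ref{lmc}, and Remark~\ref{rem2.4} turns the hypothesis ${\rm sg}(\fkq:\fkm,R)=\e_1(\fkq,R)$ into $h_0(A)-r_0(A)=0$ and $h_1(A)-r_1(A)=0$, i.e.\ $A$ is quasi-Buchsbaum. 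Second --- and this is the step your argument has no substitute for --- one must \emph{ascend} from ``$A$ is quasi-Buchsbaum'' to ``$R$ is quasi-Buchsbaum''; this is false for quotients by arbitrary parameters and requires that a $C$-system of parameters forms a $d$-sequence together with Suzuki's theorem (Theorem~3.6 of \cite{Suz87}). In your scheme the identity, if it existed, would make lifting unnecessary, but since the identity is exactly what cannot be established, the proof as proposed does not close.
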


For the definition of quasi-Buchbaum modules, see after Corollary \ref{cor3}.

The remainder of this paper is organized as follows. In Section \ref{section2}, we prove some preliminary results on the index of reducibility for parameter ideals, $C$-parameter ideals, and generalized Cohen-Macaulay rings. 
Theorem \ref{main1} is proven in Section \ref{section3}. Theorem \ref{main3} and \ref{main2} are proven in Section \ref{section4}. The assumption throughout this paper is written in the beginning of Section \ref{section2}.
%The main results and their consequences are given in the last two sections.

 	\section{Preliminary}\label{section2}
 In what follows, throughout this paper, let $(R, \fkm, k)$ be a Noetherian local ring of dimension $d$, where $\fkm$ is the maximal ideal and $k = R/\fkm$
 is the residue field of $R$. Suppose that $R$ is a homomorphic image of a Cohen-Macaulay local ring. Let
 $M$ be a finitely generated $R$-module of dimension $s$.
Let $I$ be an $\m$-primary ideal of $M$, that is, $M/IM$ is of finite length. It is well-known  that there are integers $\{\e_i(I,M)\}_{i=0}^s
$, called the {\it Hilbert coefficients} of $M$ with respect to $I$, such that
\begin{eqnarray*}
	\ell_R(M/{I^{n+1}}M)={\e}_0(I,M) \binom{n+s}{s}-{\e}_1(I,M) \binom{n+s-1}{s-1}+\cdots+(-1)^s {\e}_s(I,M)
\end{eqnarray*}
 for $n \gg0$. Here $\ell_R(N)$ denotes the length of  an $R$-module $N$. In particular, the first two coefficients $\e_0(I, M)$ and $\e_1(I, M)$ are called {\it the multiplicity} of $M$ with respect to $I$ and the {\it Chern number} of $M$ with respect to $I$, respectively. We set
$$ {\rm sg}(I, M) = \ell_R(M/IM) - \e_0(I, M) + \e_1(I, M)$$
and call it the \textit{sectional genera} for $M$ with respect to $I$ (\cite[Definition 1.3]{Ooi87}). 
\begin{lem}\label{lmsup}\label{lmsg} %(See \cite[22.6]{Nag62} and \cite[Lemma 3.1]{GoO16})  
	Let $I$ be an $\fkm$-primary ideal of $M$ and $x \in I$ a superficial  element. Then, we have
	\begin{enumerate}[$i)$]	
		\item 	$$ \e_{j}(I,M) =
		\begin{cases} \e_{j}(I,M)  &\text{if $0 \le j \le s-2$,}
			\\
			\e_{s-1}(I,M/xM) + (-1)^{s}\ell_{R}(0 :_M x) &\text{if } j = s-1
		\end{cases}$$  
		{\rm (see Proposition 22.6 in \cite{Nag62})}.\\
		\item	$$ {\rm sg}(I,M) =
		\begin{cases} {\rm sg}(I,M/xM)  &\text{if $s\ge 3$,}
			\\
			{\rm sg}(I,M/xM) +\ell_{R}(0 :_M x) &\text{if } s=2
		\end{cases}$$  
		{\rm (see Lemma 3.1 in  \cite{GoO16})}.
	\end{enumerate}
\end{lem}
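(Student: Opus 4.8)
The plan is to deduce both statements from a single length identity relating the Hilbert--Samuel functions of $M$ and $\overline{M}:=M/xM$. The cheap but essential preliminary observation is that $x\in I$ forces $xM\subseteq IM$, whence $I\overline{M}=IM/xM$ and therefore $\overline{M}/I\overline{M}\cong M/IM$; in particular $\ell_R(M/IM)=\ell_R(\overline{M}/I\overline{M})$, so the leading summand $\ell_R(M/IM)$ in the formula for ${\rm sg}$ is the same for $M$ and for $\overline{M}$. One should also record that $(0:_M x)$ has finite length: a superficial element satisfies $(0:_M x)\cap I^cM=0$ for some $c$, and since $I$ is $\fkm$-primary this makes $(0:_M x)$ embed into a module of finite length.

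For part $i)$ I would, for each $n$, use the four-term exact sequence
\[
0\longrightarrow \frac{I^{n+1}M:_M x}{I^nM}\longrightarrow \frac{M}{I^nM}\xrightarrow{\,\cdot x\,}\frac{M}{I^{n+1}M}\longrightarrow \frac{\overline{M}}{I^{n+1}\overline{M}}\longrightarrow 0,
\]
which is exact because $xI^nM\subseteq I^{n+1}M$ and $I^{n+1}M+xM$ is the preimage of $I^{n+1}\overline{M}$ under $M\twoheadrightarrow\overline{M}$. Taking alternating lengths gives
\[
\ell_R(\overline{M}/I^{n+1}\overline{M})=\ell_R(M/I^{n+1}M)-\ell_R(M/I^nM)+\ell_R\!\left((I^{n+1}M:_M x)/I^nM\right).
\]
The defining property of a superficial element is precisely what forces the last summand to equal $\ell_R(0:_M x)$ for all $n\gg0$ (this is the content of \cite[Proposition 22.6]{Nag62}). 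Now substitute the Hilbert polynomials, apply Pascal's rule $\binom{n+s-j}{s-j}-\binom{n+s-1-j}{s-j}=\binom{n+s-1-j}{s-1-j}$, and compare coefficients as polynomials in $n$: the $j=s$ term of $M$ disappears since its binomial difference is $0$; for $0\le j\le s-2$ the coefficients match, giving $\e_j(I,M)=\e_j(I,\overline{M})$; and the constant terms differ exactly by $\ell_R(0:_M x)$, which after rearranging is the asserted correction $\e_{s-1}(I,M)=\e_{s-1}(I,\overline{M})+(-1)^{s}\ell_R(0:_M x)$.

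Part $ii)$ is then immediate from part $i)$ together with the identification $\overline{M}/I\overline{M}\cong M/IM$. If $s\ge 3$ then $1\le s-2$, so both $\e_0$ and $\e_1$ are unchanged on passing to $\overline{M}$, and hence ${\rm sg}(I,M)={\rm sg}(I,\overline{M})$. If $s=2$ then $\e_0(I,M)=\e_0(I,\overline{M})$ while $\e_1(I,M)=\e_1(I,\overline{M})+\ell_R(0:_M x)$ (here $(-1)^{s}=1$), so plugging into ${\rm sg}(I,M)=\ell_R(M/IM)-\e_0(I,M)+\e_1(I,M)$ yields ${\rm sg}(I,M)={\rm sg}(I,\overline{M})+\ell_R(0:_M x)$.

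The only step that is more than bookkeeping is the control of the colon module $(I^{n+1}M:_M x)$ for $n\gg0$, i.e.\ the reduction of $\ell_R\big((I^{n+1}M:_M x)/I^nM\big)$ to $\ell_R(0:_M x)$; this is the standard input from the theory of superficial elements, ultimately a consequence of the Artin--Rees lemma. Since both parts are classical, one may alternatively simply cite \cite[Proposition 22.6]{Nag62} for $i)$ and \cite[Lemma 3.1]{GoO16} for $ii)$, as the statement already indicates.
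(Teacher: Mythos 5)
Your proof is correct. The paper itself gives no argument for this lemma -- it simply cites Proposition 22.6 of Nagata and Lemma 3.1 of Goto--Ozeki -- and what you have written out is precisely the standard argument underlying those citations: the four-term exact sequence
\[
0\longrightarrow \frac{I^{n+1}M:_M x}{I^nM}\longrightarrow \frac{M}{I^nM}\xrightarrow{\,\cdot x\,}\frac{M}{I^{n+1}M}\longrightarrow \frac{M/xM}{I^{n+1}(M/xM)}\longrightarrow 0,
\]
the identification of the kernel term with $\ell_R(0:_M x)$ for $n\gg 0$ (via $(I^{n+1}M:_M x)=I^nM+(0:_M x)$ and $I^nM\cap(0:_M x)=0$ for large $n$, which is the superficiality input you correctly isolate as the only nontrivial step), and Pascal's rule to compare coefficients. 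Part $ii)$ then follows exactly as you say from $M/IM\cong (M/xM)/I(M/xM)$ and the sign $(-1)^s$ in the $j=s-1$ case. So your route coincides with the intended one; nothing is missing.
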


	Let us recall the definitions  of $C$-parameter ideals and the stable value of $M$, which are introduced by N. T. Cuong and P. H. Quy in \cite{CuQ20}.  	Set $\fka_i=\Ann_R \, H^j_\fkm(M)$ for $j\in \mathbb{Z}$, and $\fka(M)=\fka_0(M)\cdots\fka_{s-1}(M)$. We denote $$\fkb(M)=\bigcap\limits_{\underline{x},i=1}^{s}\Ann_R ((0):_{M/(x_1,\ldots,x_{i-1})M}x_i),$$
	where $\underline{x} = x_1,\ldots, x_s$ runs over all systems of parameters of $M$.
With the above notation, a system $x_1, \ldots, x_s$ of parameters of $M$ is called a {\it $C$-system of parameters} of $M$ if $x_s \in \fkb(M)^3$ and $x_i \in \fkb(M/(x_{i+1}, \ldots, x_s)M)^3$ for all $i = s - 1, \ldots, 1$.  A parameter ideal $\fkq$ of $M$ is called {\it $C$-parameter ideal} if 
it is generated by a $C$-system of parameters of $M$ (see \cite{MoQ17}).  Note that $C$-systems of parameters of $M$ always exist, provided $R$ is a homomorphic image of a Cohen-Macaulay local ring (see \cite{CuC18}) and a
$C$-system of parameters of $M$ forms a $d$-sequence of $M$ in the sense of \cite{Hun82}.

On the other hand, a proper submodule $N$ of $M$ is called {\it irreducible}  if it can not be written as an  intersection of two strictly larger submodules of $M$. For a submodule $N$ of $M$, the number of irreducible components of an irredundant irreducible decomposition of $N$, which is independent of the choice of the decomposition, is called the \textit{index of reducibility} of $N$ and denoted by  $\ir_M(N)$ (\cite{Noe21}).  
For a parameter ideal $\fkq$ of $M$, we have 
$$\ir_M(\fkq) : = \ir_M(\fkq M) =\ell_R([\fkq M :_M \fkm]/\fkq M).$$
%there exists a $C$-system $x_1, x_2, \ldots, x_s$ of parameters of $M$ such that $\fkq=  (x_1, x_2, \ldots, x_s)$.

In \cite{CuQ20}, N. T. Cuong and P. H. Quy show that the index $\ir_M(\fkq)$ of reducibility of $C$-parameter ideals $\fkq$ of $M$ are independent
of the choice of $\fkq$. This invariant is called the \textit{stable value} of $M$ and is denoted by $\mathcal{N}_R(M)$. 

We further set 
\begin{align*}
\I(\fkq, M)=&\ell(M/\fkq M) -\e_0(\fkq, M) \quad \text{and}\\
\I(M) =& {\rm sup}\,  \I(\fkq, M),
\end{align*}
%$$\I(\fkq, M)=\ell(M/\fkq M) -\e_0(\fkq, M),$$ 
%and $$ \I(M) = {\rm sup}\,  \I(\fkq, M),$$ 
where $\fkq$ runs over all systems of parameters of $M$. 
An $R$-module $M$ is said to be a \textit{generalized Cohen--Macaulay module} if $\I(M) < \infty$ (see \cite{CST 78}). 
%The condition $\I(M) < \infty$ is equivalent to saying that there exists a parameter ideal $\fkq$ of $M$ such that $\I(\fkq, M) = \I(M)$, and such a parameter ideal is called a \textit{standard} parameter ideal of $M$ (see \cite{Tru 86}). 
A parameter ideal $\fkq$ providing $\I(\fkq, M) = \I(M)$ is called a \textit{standard parameter ideal} of $M$ (see \cite{Tru 86}). 
Once $M$ is to be a generalized Cohen-Macaulay module, then every $C$-parameter ideal is standard.   

We summarize a way to compute the invariants introduced above.
Let 
\begin{center}
$h_i(M) = \ell_R(H^i_\fkm (M))$ \quad and \quad  $r_i(M)=\ell((0):_{H^i_\fkm(M)}\fkm)$ 
\end{center}
for $i = 1, \ldots, s$. 
 
  \begin{lem} \label{fact2} 
 	Suppose that $R$ is generalized Cohen-Macaulay with $d=\dim R >0$ and $\fkq$ is a $C$-parameter ideal of $R$.
 	 Then we have the following.
 	\begin{enumerate}[$i)$]	
 		\item 	$\I(\fkq, R) = \I(R) = \displaystyle\sum_{i=0}^{d-1}\binom{d-1}{i} h_i(R)$ {\rm (see \cite[(3.7)]{CST 78})}. 	
 		\item ${\rm ir}_R(\fkq) = \mathcal{N}(R) =  \displaystyle\sum_{i=0}^{d}\binom{d}{i} r_i(R)$ {\rm (see \cite[Theorem 1.1]{CuT08})}. 
 		 		\item 
 		$$  \e_i(\fkq :\fkm, R)= \begin{cases}
 			(-1)^i \left( \displaystyle\sum_{j=1}^{d-i} \binom{d-i-1}{j-1} h_i(R) - \sum_{j=1}^{d-i+1} r_j(R) \right)  &\text{ if } i=1,\ldots,d-1,\\
 			(-1)^d( h_0(R) - r_1(R))&\text{ if } i=d,
 		\end{cases}$$ 
 		provided $R$ is not regular	{\rm (see \cite[Theorem
 			 5.2]{CQT19}).}
 		\item  		
 $$ (-1)^i \e_i(\fkq, R)= \begin{cases}
 	h_0(R)  &\text{ if } i =d,\\
 	\displaystyle\sum_{j=1}^{d-i} \binom{d-i-1}{j-1} h_i(R) &\text{ if }  0 < i < d,
 \end{cases}$$ 
		{\rm (see \cite[Korollary 3.2]{Sch82}).}	

 	\end{enumerate}
 \end{lem}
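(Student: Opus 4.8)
The plan is to read Lemma \ref{fact2} as a compilation of four computations, each of which is available in the literature, and whose common enabling hypothesis is that a $C$-parameter ideal of a generalized Cohen-Macaulay ring is standard. So the first move is to record this reduction: since $R$ is generalized Cohen-Macaulay and $\fkq$ is a $C$-parameter ideal, $\fkq$ is a standard parameter ideal of $R$, as recalled in the discussion preceding the lemma, and its generators form a $d$-sequence in the sense of \cite{Hun82}. With standardness in hand, each of the four items becomes the specialization to $\fkq$ of a known formula for standard parameter ideals in generalized Cohen-Macaulay rings, and the remaining work is the bookkeeping that matches the conventions for the cohomological data $h_i(R)$ and $r_i(R)$.

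For (i) I would first use standardness to get $\I(\fkq, R)=\I(R)$ directly from the definition of $\I(R)$ as the supremum over systems of parameters, a value attained precisely by standard ideals; the closed form $\I(R)=\sum_{i=0}^{d-1}\binom{d-1}{i}h_i(R)$ is then quoted from \cite[(3.7)]{CST 78}. For (ii), the equality $\ir_R(\fkq)=\mathcal{N}(R)$ is the definition of the stable value together with its independence of the chosen $C$-parameter ideal \cite{CuQ20}, while the evaluation $\mathcal{N}(R)=\sum_{i=0}^{d}\binom{d}{i}r_i(R)$ is the computation of the index of reducibility of a standard parameter ideal carried out in \cite[Theorem 1.1]{CuT08}; here one checks that $r_i(R)=\ell\big((0):_{H^i_\fkm(R)}\fkm\big)$ is the socle dimension used there.

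Items (iii) and (iv) are the Hilbert-coefficient computations. For (iv) the coefficients $\e_i(\fkq,R)$ of the parameter ideal $\fkq$ (for $1\le i\le d$, the case $i=0$ being excluded since $\e_0$ is not purely cohomological) are given by Schenzel's formula \cite[Korollary 3.2]{Sch82}; the only care needed is to transcribe his indexing and signs into the stated alternating form. For (iii) I would invoke \cite[Theorem 5.2]{CQT19} for the coefficients $\e_i(\fkq:\fkm,R)$ of the non-parameter ideal $\fkq:\fkm$. A useful organizing check is that, term by term, (iii) is exactly (iv) corrected by the socle sums $\sum_{j=1}^{d-i+1}r_j(R)$, reflecting the passage from $\fkq$ to $\fkq:\fkm$; this correction is the content of \cite{CQT19}.

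The main obstacle is not a new argument but the verification surrounding (iii): one must confirm that the non-regular generalized Cohen-Macaulay setting of \cite{CQT19}, applied to a $C$-parameter (hence standard) ideal, reproduces the stated expression exactly, and in particular one must carry along the hypothesis that $R$ is not regular, since for a regular ring the lower local cohomology vanishes and $\fkq:\fkm$ degenerates, so the formula no longer takes the given shape. Once the standardness reduction and these notational identifications are in place, the four items follow at once from their references.
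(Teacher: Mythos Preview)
Your proposal is correct and matches the paper's treatment: the paper gives no proof of this lemma at all, merely citing the four references inline, so your reduction via ``$C$-parameter ideals are standard in generalized Cohen--Macaulay rings'' together with the literature citations is exactly the intended reading. Your added commentary on why the cited results apply (standardness, $d$-sequence, socle conventions) is a faithful unpacking of what the paper leaves implicit.
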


The purpose of this paper is to provide characterizations of properties of rings (e.g., Cohen-Macaulay, Gorenstein, and quasi-Buchsbaum) via the invariants introduced in this section. Hence, we note the invariants for the case of Cohen-Macaulay rings, although it is well-known and immediately follows from Lemma \ref{fact2}.

\begin{rem}
	Suppose that $R$ is a non-regular Cohen-Macaulay ring of dimension $d \ge 2$ and $\fkq$ is a $C$-parameter ideal of $R$. Then we have the following.
	\begin{enumerate}[$i)$] 
		\item $\I(\fkq, R) = 0$.
		\item ${\rm ir}_R(\fkq) = \mathcal{N}(R) = r_d(R)$.
		\item $\e_1(\fkq :\fkm, R)=r_d(R)$ \quad and \quad $\e_i(\fkq :\fkm, R)=0$ for all $2 \le i \le d$.
		\item $\e_i(\fkq, R)=0$ for all $1 \le i \le d$.
	\end{enumerate}
\end{rem}

We here note our strategy in Section \ref{section3}: by passing to a part of a $C$-system of parameters of $R$, we can reduce our assertions to the $2$-dimensional case. We then obtain relations of invariants introduced in this section from Lemma \ref{fact2}. Hence, we also summarize the $2$-dimensional case of Lemma \ref{fact2} for the reader's convenience.

\begin{rem}\label{rem2.4}
	Suppose that $R$ is a non-regular generalized Cohen-Macaulay ring with $d=2$ and $\fkq$ is a $C$-parameter ideal of $R$. Then we have the following.
	\begin{enumerate}[$i)$] 
\item $\I(\fkq, R) = \I(R) =h_0(R) + h_1(R)$.
\item ${\rm ir}_R(\fkq) = \mathcal{N}(R) = r_0(R) +2r_1(R) + r_2(R)$.
\item $\e_1(\fkq :\fkm, R)=r_1(R) + r_2(R) - h_1(R)$ \quad and \quad $\e_2(\fkq :\fkm, R)=h_0(R) - r_1(R)$.
\item $\e_1(\fkq, R)=-h_1(R)$ \quad and \quad $\e_2(\fkq, R)=-h_0(R)$.
\end{enumerate}
\end{rem}

The following example shows a computation of the Hilbert-Samuel functions of $\fkq$ and $\fkq:_R \fkm$.

\begin{ex}\label{ex25}
Let $R=k[[X, XY^2, X^2Y^3, X^3Y]]$ be a subring of the formal power series ring $k[[X, Y]]$ over a field $k$. Let $\fkm$ be the maximal ideal of $R$. For an integer $a\ge 6$, set 
\[
\fkq_a=(X^a, X^aY^{2a}).
\]
Then we have the following.
\begin{enumerate}[$i)$] 
\item $\fkq_a$ is a $C$-parameter ideal.
\item $\ell_R(R/\fkq_a^{n+1})=2a^2\binom{n+2}{2} + 2\binom{n+1}{1}$ for all $n\ge 0$. Hence, 
\begin{center}
	$\e_0(\fkq_a, R)=2a^2$, $\e_1(\fkq_a, R)=-2$, and $\e_2(\fkq_a, R)=0$.
\end{center}
\item $\ell_R(R/(\fkq_a:_R\fkm)^{n+1})=2a^2\binom{n+2}{2} - \binom{n+1}{1} -1$ for all $n\ge 0$. Hence, 
\begin{center}
	$\e_0(\fkq_a:_R\fkm, R)=2a^2$, $\e_1(\fkq_a:_R\fkm, R)=1$, and $\e_2(\fkq_a:_R\fkm, R)=-1$.
\end{center}
\end{enumerate}
\end{ex}

\begin{proof}
Note that we have $\overline{R}=k[[X, XY, XY^2]]$, where $\overline{R}$ denotes the integral closure of $R$. Indeed, it is standard to check that $k[[X, XY, XY^2]]\subseteq \overline{R}$. It follows that $\overline{R}=k[[X, XY, XY^2]]$ since $k[[X, XY, XY^2]]$ is a normal domain by \cite[Theorem 6.1.4]{BrH98}. Hence, by applying the local cohomology functor $H_\fkm(-)$ to the exact sequence $0 \to R \to \overline{R} \to \overline{R}/R \to 0$, we obtain that
\begin{align}\label{eqeq1}
H_\fkm^0(R)=0, H_\fkm^1(R)\cong H_\fkm^0(\overline{R}/R)\cong \overline{R}/R%\cong R/(X^2, XY^2, X^2Y^3, X^3Y)
, \text{ and }  H_\fkm^2(R)\cong H_\fkm^2(\overline{R}).
\end{align}
	
$i)$. By \eqref{eqeq1}, we obtain that $\fka(R)=\fka_0(R)\fka_1(R)=\Ann_R(\overline{R}/R)\supseteq \fkm^2$. By noting that $\fkb(R)\supseteq \fka(R)$ (see \cite[Satz 2.4.5]{Sch82}), $\fkq_a \subseteq \fkm^6\subseteq \fkb(R)^3$ since $a \ge 6$. It follows that $\fkq_a$ is a $C$-parameter ideal.

$ii)$. Note that the set of all monomials contained in $R$ is described as follows.

\begin{center}
	\begin{tikzpicture}

	\draw (1,0)node[below]{$1$}; %(1,0)
	\draw (2,0)node[below]{$2$}; %(2,0)
	\draw (3,0)node[below]{$3$}; %(3,0)
	\draw (4,0)node[below]{$\cdots$}; 
	
	\draw (0,1)node[left]{$1$}; 
	\draw (0,2)node[left]{$2$}; 
	
	%\coordinate (A) at (1,1);
	%\coordinate (B) at (2,1);
	
	\fill[black] (1,1) circle (0.06);
	\fill[black] (2,1) circle (0.06);

	\coordinate (C) at (1,2);
	\coordinate (D) at (3,2);
	\coordinate (E) at (3,0);
	\coordinate (F) at (0,0);
	
	\coordinate (G) at (4.5,0);
	\coordinate (H) at (4.5,4.6);
	\coordinate (I) at (2.3,4.6);
	
	\fill[lightgray] (C)--(D)--(E)--(G)--(H)--(I)--cycle; 
	
	\draw[ultra thick] (C)--(D)--(E)--(F);
	\draw[ultra thick,domain=1:2.4] plot(\x,2*\x)node[above]{$Y=2X$};
	\draw[dashed] (0,0)--(1,2);
	
	\draw[->,>=stealth,semithick] (-0.2,0)--(5,0)node[above]{$X$}; %x
	\draw[->,>=stealth,semithick] (0,-0.2)--(0,5)node[right]{$Y$}; %y
	\draw (0,0)node[below left]{O}; %origin
\end{tikzpicture}
\end{center}
With the above description, by noting that $\fkq_a^{n+1}=(X^{a(n+1)}Y^{2ai} \mid 0 \le i \le n+1)$, it is straightforward to check that $\ell_R(R/\fkq_a^{n+1})=2a^2\binom{n+2}{2} + 2\binom{n+1}{1}$ for all $n\ge 0$.

$iii)$. First, we prove the following claim.
\begin{claim}\label{claim1}
$\fkq_a^{n+1}:_R\fkm = \fkq_a^n (\fkq_a:_R\fkm) = (\fkq_a:_R\fkm)^{n+1}$ for all $n\ge 0$.
\end{claim}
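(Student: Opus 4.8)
The plan is to reduce everything to combinatorics in the numerical semigroup $S=\mathbb{Z}_{\ge 0}(1,0)+\mathbb{Z}_{\ge 0}(1,2)+\mathbb{Z}_{\ge 0}(2,3)+\mathbb{Z}_{\ge 0}(3,1)\subseteq\mathbb{Z}_{\ge 0}^2$, for which $R$ is the $k$-span of $\{X^uY^v:(u,v)\in S\}$. Since $\overline{R}=k[[X,XY,XY^2]]$ is the $k$-span of the whole cone $\{(u,v):0\le v\le 2u\}$, the short exact sequence $0\to R\to\overline{R}\to\overline{R}/R\to 0$ used in part $i)$ shows at once that $S=\{(u,v)\in\mathbb{Z}_{\ge 0}^2: v\le 2u\}\setminus\{(1,1),(2,1)\}$, i.e. $R$ and $\overline{R}$ differ only in the two monomials $XY,X^2Y$. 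With this description, every colon, power, and product of the monomial ideals below is decided by the rule that a monomial $X^pY^q$ lies in an ideal generated by monomials $X^{p_i}Y^{q_i}$ if and only if $(p-p_i,q-q_i)\in S$ for some $i$.

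First I would determine $J:=\fkq_a:_R\fkm$. As $R/\fkq_a$ is $\mathbb{Z}^2$-graded, its socle is spanned by monomial classes, so it suffices to list the $X^uY^v\in R\setminus\fkq_a$ with $X^{u+1}Y^v,\,X^{u+1}Y^{v+2},\,X^{u+2}Y^{v+3},\,X^{u+3}Y^{v+1}\in\fkq_a$. Writing $\fkq_a=(X^a,(XY^2)^a)$, the $x$-degree-$u$ row of $\fkq_a$ is empty for $u<a$ and is the full cone row for $u\ge 2a$, so only $a-1\le u\le 2a-1$ can contribute; a short row-by-row check (the rows $u\in\{a,a+1,a+2\}$ feeling the two holes of $S$) leaves exactly
\[
z_1=X^{a+2}Y,\qquad z_2=X^{a+2}Y^{2a+1},\qquad z_3=X^{2a-2}Y^{2a-2},\qquad z_4=X^{2a-1}Y^{2a-1},
\]
so $J=\fkq_a+(z_1,z_2,z_3,z_4)$. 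Then I would check $J^2=\fkq_a J$: since $J=\fkq_a+\sum_k z_kR$ one has $J^2=\fkq_a J+\sum_{k,l}z_kz_lR$, and each of the ten monomials $z_kz_l$ in fact lies in $\fkq_a^2$ (a one-line divisibility check in $S$), so $J^2=\fkq_a J$. Because $\fkq_a\subseteq J$, an easy induction then gives $J^{n+1}=\fkq_a^n J$ for all $n\ge 0$.

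It remains to prove $\fkq_a^{n+1}:_R\fkm=\fkq_a^n J$; the inclusion $\supseteq$ is immediate from $J\fkm\subseteq\fkq_a$, so only $\fkq_a^{n+1}:_R\fkm\subseteq\fkq_a^n J$ is at issue, after which the Claim follows from $\fkq_a^n J=J^{n+1}$. For this I would again pass to socles: $\fkq_a^{n+1}:_R\fkm=\fkq_a^{n+1}+\Soc(R/\fkq_a^{n+1})$, and I claim $\Soc(R/\fkq_a^{n+1})$ is spanned by the $3n+4$ monomial classes $X^{an}Y^{2aj}z_k$ with $0\le j\le n$ and $1\le k\le 4$ — each of which lies in $\fkq_a^n J$, since $X^{an}Y^{2aj}\in\fkq_a^n$ and $z_k\in J$. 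The point is that $\fkq_a^{n+1}=(X^{a(n+1)}Y^{2aj}:0\le j\le n+1)$ is $2a$-periodic in the $Y$-direction, so a monomial $X^pY^q\in R\setminus\fkq_a^{n+1}$ killed by $\fkm$ is forced to have $x$-degree $p=a(n+1)+p'$ with $0\le p'\le a-1$, and the row-analysis is then the periodic repetition of the one above (cases $p'\in\{0,1,2\}$ near the holes, $3\le p'\le a-1$ generic), pinning $X^pY^q$ to one of the listed monomials. This yields $\fkq_a^{n+1}:_R\fkm=\fkq_a^n J=J^{n+1}=(\fkq_a:_R\fkm)^{n+1}$.

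The main obstacle will be the two socle computations — for $R/\fkq_a$ and for $R/\fkq_a^{n+1}$ — that is, checking that no unexpected socle monomial occurs. This is precisely where the holes $(1,1),(2,1)$ of $S$ interact with the generators $X^a,(XY^2)^a$ of $\fkq_a$, so one must be careful with the boundary rows $p'\le 2$ and with the rows $p'$ close to $a$; the $2a$-periodicity reduces the general $n$ to essentially the single case $n=0$, but the monomial bookkeeping there is the real content of the argument.
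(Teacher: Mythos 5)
Your proposal is correct and follows essentially the same route as the paper: identify the four monomial socle generators $X^{a+2}Y$, $X^{a+2}Y^{2a+1}$, $X^{2a-2}Y^{2a-2}$, $X^{2a-1}Y^{2a-1}$ of $(\fkq_a:_R\fkm)/\fkq_a$, reduce $(\fkq_a:_R\fkm)^{n+1}=\fkq_a^n(\fkq_a:_R\fkm)$ to the single divisibility check $I^2\subseteq\fkq_a^2$, and verify $\fkq_a^{n+1}:_R\fkm=\fkq_a^n(\fkq_a:_R\fkm)$ by a direct monomial (socle) computation. Your explicit description of the semigroup with the two holes $(1,1),(2,1)$, and your count $3n+4$ (which correctly accounts for the coincidences $X^{an}Y^{2aj}z_2=X^{an}Y^{2a(j+1)}z_1$), just make explicit what the paper leaves to the reader via its lattice picture.
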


\begin{proof}[Proof of Claim \ref{claim1}]
It is easy to check that the monomial basis of a $k$-vector space  $(\fkq_a:_R\fkm)/\fkq_a$ is $X^{a+2}Y^{2a+1}$, $X^{2a-1}Y^{2a-1}$, $X^{2a-2}Y^{2a-2}$, and $X^{a+2}Y$. 
Then, by using the description in $ii)$, we observe that $\fkq_a^{n+1}:_R\fkm = \fkq_a^n (\fkq_a:_R\fkm)$ for all $n\ge 0$. Since $\fkq_a^n (\fkq_a:_R\fkm) \subseteq  (\fkq_a:_R\fkm)^{n+1}$ is clear, the rest to prove is $\fkq_a^n (\fkq_a:_R\fkm) \supseteq (\fkq_a:_R\fkm)^{n+1}$ for all $n\ge 0$. It is enough to prove that $\fkq_a (\fkq_a:_R\fkm) \supseteq (\fkq_a:_R\fkm)^{2}$. 

Set $I=(X^{a+2}Y^{2a+1}, X^{2a-1}Y^{2a-1}, X^{2a-2}Y^{2a-2}, X^{a+2}Y)$, the ideal generated by the monomial basis of $(\fkq_a:_R\fkm)/\fkq_a$. Since $\fkq_a:_R\fkm=\fkq_a +I$, we obtain that 
\[
(\fkq_a:_R\fkm)^2=\fkq_a (\fkq_a:_R\fkm) + I^2.
\]
On the other hand, the direct calculation shows that $I^2\subseteq \fkq_a^2$ since $a\ge 6$. It follows that $\fkq_a (\fkq_a:_R\fkm) \supseteq (\fkq_a:_R\fkm)^{2}$. 
\end{proof}

By Claim \ref{claim1}, we have 
\begin{align*}
\ell_R(R/(\fkq_a:_R\fkm)^{n+1})=&\ell_R(R/\fkq_a^{n+1}) - \ell_R((\fkq^{n+1}:_R \fkm)/\fkq_a^{n+1})\\
=&2a^2\binom{n+2}{2} + 2\binom{n+1}{1} - [3(n+1)+1]\\
=&2a^2\binom{n+2}{2} - \binom{n+1}{1} -1
\end{align*}
for all $n\ge 0$. 
\end{proof}

\begin{rem}
In Example \ref{ex25}, by \eqref{eqeq1}, we obtain that $h_0(R)=r_0(R)=0$, $h_1(R)=2$, $r_1(R)=1$, and  $r_2(R)=2$. (Note that $\overline{R}$ is a Gorenstein ring by \cite[Theorem 6.3.5]{BrH98} and is generated by $2$ elements as an $R$-module). Therefore, by Remark \ref{rem2.4}, we have
\begin{align*}
	&i)\ \I(\fkq, R) =2, \quad ii)\ \mathcal{N}(R) =4, \quad iii)\ \e_1(\fkq :\fkm, R)=1,\ \e_2(\fkq :\fkm, R)=-1, \\
	&iv)\ \e_1(\fkq, R)=-2,\ \text{and } \e_2(\fkq, R)=0
\end{align*}
for all $C$-parameter ideals $\fkq$ of $R$.
\end{rem}

We further note useful properties of a $C$-system of parameters of $M$.

 \begin{lem}  \label{lmc}
 Let $x_1, \ldots , x_s$ be a $C$-system of parameters of $M$. Then the following hold true.
 \begin{enumerate}[$i)$]
 		\item For each $i = 1, \ldots, s$, we have that $x_1, \ldots, x_{i-1}, x_{i+1}, \ldots, x_s$ is a $C$-system of parameters of $M/x_iM$ and$$ \mathcal{N}_R(M) = \mathcal{N}_R(M/x_iM)$$
 		{\rm (see \cite[Lemma 2.13]{CuQ20})}.
 		\item For all $i = 1, \ldots, s$, we have  
 		$$\Ass M/(x_1, \ldots, x_i)M \subseteq \Assh M/(x_1, \ldots, x_i)M \cup \{\fkm\},$$ where $\Assh M = \{\fkp \in \Ass M \mid \dim R/\fkp = \dim M \}$ %and $\fkq_i=x_1,\ldots,x_i$
 		{\rm (see \cite[Remark 3.3]{MoQ17})}.
 	\end{enumerate}
 \end{lem}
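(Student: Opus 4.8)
The plan is to deduce both statements essentially from the cited sources: part $i)$ is \cite[Lemma 2.13]{CuQ20} and part $ii)$ is \cite[Remark 3.3]{MoQ17}. Below I indicate the shape of the arguments, with the one point (the equality of stable values) that I would spell out.

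For the first assertion of $i)$, I would use that the definition of a $C$-system of parameters is recursive in its last coordinate: since $x_s\in\fkb(M)^3$ and, by definition, $x_1,\dots,x_{s-1}$ is a $C$-system of parameters of $M/x_sM$, the case $i=s$ is immediate. For an arbitrary index $i$ one first permutes the sequence so that $x_i$ becomes the last term. This is legitimate because $\fkb(-)$ is well behaved under factoring out a parameter contained in it (the relevant inclusions between $\fkb(M)$, $\fkb(M/(x_{j+1},\dots,x_s)M)$ and their powers are exactly what is established in \cite{CuQ20}), so that any permutation of a $C$-system of parameters is again a $C$-system of parameters; then the case $i=s$ applies to the reordered sequence.

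For the equality $\mathcal{N}_R(M)=\mathcal{N}_R(M/x_iM)$, set $\fkq=(x_1,\dots,x_s)$ and $\ol\fkq=(x_1,\dots,x_{i-1},x_{i+1},\dots,x_s)$, so that by the first assertion $\ol\fkq$ is a $C$-parameter ideal of $M/x_iM$. Since $x_iM\subseteq\fkq M$, there is a canonical isomorphism $M/\fkq M\cong (M/x_iM)\big/\ol\fkq(M/x_iM)$, and this identifies the two socles, whence
$$
\ir_M(\fkq)=\ell_R\bigl((0):_{M/\fkq M}\fkm\bigr)=\ir_{M/x_iM}(\ol\fkq).
$$
By the defining property of the stable value \cite{CuQ20} (the index of reducibility of any $C$-parameter ideal computes $\mathcal{N}$), the left-hand side equals $\mathcal{N}_R(M)$ and the right-hand side equals $\mathcal{N}_R(M/x_iM)$, which gives the claim.

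For part $ii)$, I would argue by induction on $i\ge 1$. A $C$-system of parameters forms a $d$-sequence of $M$ in the sense of \cite{Hun82}, and $R$ is universally catenary, being a homomorphic image of a Cohen--Macaulay local ring; the condition $x_i\in\fkb(M/(x_{i+1},\dots,x_s)M)^3$ controls the colon module $(0):_{M/(x_1,\dots,x_{i-1})M}x_i$ enough that passing from $M/(x_1,\dots,x_{i-1})M$ to $M/(x_1,\dots,x_i)M$ can only introduce associated primes equal to $\fkm$ or of maximal coheight. The main obstacle is precisely here: unlike for a filter-regular sequence, an element of a $C$-system of parameters may lie in a lower-dimensional minimal prime of $M$ when $M$ is not equidimensional, so one cannot simply invoke filter-regularity; one must instead combine the $d$-sequence property with universal catenarity to pin down the dimensions of the newly created associated primes. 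This is carried out in \cite[Remark 3.3]{MoQ17}.
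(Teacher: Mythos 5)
The paper gives no proof of this lemma at all: both parts are simply quoted from \cite{CuQ20}*{Lemma 2.13} and \cite{MoQ17}*{Remark 3.3}, which is exactly what you do. The extra details you supply are correct — in particular the identification $M/\fkq M\cong (M/x_iM)/\ol{\fkq}(M/x_iM)$ does identify the socles and yields $\ir_M(\fkq)=\ir_{M/x_iM}(\ol{\fkq})$, so the stable-value equality follows once the first assertion (permutability of $C$-systems of parameters, established in the cited sources) is granted.
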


Recall that the largest submodule of $M$ of dimension less than $s$ is called the {\it unmixed component} of $M$, and denoted by $U_M (0)$. %\old{It follows from Proposition 2.2 and Corollary 2.3 in \cite{Sch99} that $M/U_M(0)$ is unmixed.} 
The following result plays a key role in the  arguments used in this paper. 

 \begin{lem} \label{lemun}
 Suppose that $M$ is unmixed with $s=\dim M \geq 2$ and $x_1, \dots , x_s$ is a $C$-system of parameters of $M$. Then
 	\begin{enumerate}[$i)$]
 	\item $H_{\m}^1 (M/(x_1, x_2, \dots, x_{i})M)$ is a finitely generated $R$-module for all $1 \le i \le s-2$. %provided $s \geq 2$. 
\item  Let $N/(x_1, \ldots, x_{s-2})M$ denote the unmixed component of $M/(x_1, \ldots, x_{s-2})M$. If $M/N$ is a Cohen-Macaulay $R$-module (of dimension $2$), then $M$ is a Cohen-Macaulay $R$-module (of dimension $s$).
%\old{Let $N/(x_1, \ldots, x_{j})M$ denote the unmixed component of $M/(x_1, \ldots, x_{j})M$ for $j = 1, \ldots, s-2$. If $M/N$ is Cohen-Macaulay, so is also $M$.}
 	\end{enumerate}
 \end{lem}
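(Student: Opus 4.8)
The plan is to build both parts around the fact that a $C$-system of parameters forms a $d$-sequence, together with the structure of $\Ass M/(x_1,\dots,x_i)M$ recorded in Lemma \ref{lmc}(ii). For part (i), I would argue by induction on $i$. The base case $i=1$ uses that $M$ is unmixed of dimension $s\ge 2$, so $H^0_\fkm(M)=0$ and $H^1_\fkm(M)=0$ (indeed $\depth M\ge 2$ since $M$ has no embedded or lower-dimensional primes and $s\ge 2$, using that $M$ is a homomorphic image situation — here unmixedness gives $\depth M \ge \min(2, s)$); then the short exact sequence $0\to M\xrightarrow{x_1} M\to M/x_1M\to 0$ yields $H^1_\fkm(M/x_1M)\hookrightarrow H^2_\fkm(M)$, but this only bounds it, not finiteness. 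The cleaner route: since $x_1,\dots,x_s$ is a $d$-sequence and $i\le s-2$, the module $M/(x_1,\dots,x_i)M$ has dimension $s-i\ge 2$, and by Lemma \ref{lmc}(ii) its associated primes other than $\fkm$ are all of dimension $s-i$; hence $H^0_\fkm(M/(x_1,\dots,x_i)M)$ has finite length, and the key claim is that $H^1_\fkm(M/(x_1,\dots,x_i)M)$ is finitely generated. I would establish this by combining the $d$-sequence property — which gives $(x_1,\dots,x_i)M :_M x_{i+1} = (x_1,\dots,x_i)M :_M \fkm^\infty$ type equalities, or more precisely the colon-capturing behavior of $d$-sequences — with the fact that $C$-parameters of $M$ restrict to $C$-parameters of $M/x_1M$ (Lemma \ref{lmc}(i)), so one can induct: $M/x_1M$ satisfies the hypotheses of Lemma \ref{lmc}(ii) and one shows $H^1_\fkm$ stays finitely generated under passage $M' \rightsquigarrow M'/x_{i}M'$ because the relevant $0:_{M'/x M'}$ modules are finite length (generalized-Cohen–Macaulay-like control coming from the $\fkb(M)^3$ condition). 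This inductive step — showing finite generation of $H^1_\fkm$ propagates — is where I expect the real work to be.

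For part (ii), let $\overline M := M/(x_1,\dots,x_{s-2})M$, which has dimension $2$, and write $N/(x_1,\dots,x_{s-2})M = U_{\overline M}(0)$ for its unmixed component. The hypothesis is that $M/N$ is Cohen–Macaulay of dimension $2$; I want to conclude $M$ is Cohen–Macaulay of dimension $s$. First I would note $0\to N/(x_1,\dots,x_{s-2})M \to \overline M \to M/N \to 0$, where the left term has dimension $\le 1$ (so equals $H^0_\fkm$ plus possibly a $1$-dimensional piece) — actually $U_{\overline M}(0)$ has dimension $\le 1$. Since $M/N$ is Cohen–Macaulay of dimension $2$, applying $H^\bullet_\fkm$ gives $H^i_\fkm(\overline M) \cong H^i_\fkm(N/(x_1,\dots,x_{s-2})M)$ for $i\le 1$, so $H^0_\fkm(\overline M) = H^0_\fkm(N/(x_1,\dots,x_{s-2})M)$ and $H^1_\fkm(\overline M) \cong H^1_\fkm$ of the lower-dimensional piece, while $H^2_\fkm(\overline M)\cong H^2_\fkm(M/N)$. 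Now the crux: by part (i), $H^1_\fkm(\overline M)$ is finitely generated, hence by a standard argument ($d$-sequences / $C$-parameters behave well) this forces $U_{\overline M}(0)$ to have finite length, i.e. dimension $\le 0$; combined with $M$ being unmixed I'd like to push back up. The back-propagation uses Lemma \ref{lmc}(i)–(ii) in reverse: $x_{s-2}$ is an $M/(x_1,\dots,x_{s-3})M$-regular-ish element once we know the lower module is CM, via the $d$-sequence property $(x_1,\dots,x_{s-3})M :_M x_{s-2} = (x_1,\dots,x_{s-3})M :_M \fkm$, and the finite-length obstruction vanishing at the bottom lets one climb the chain $s-2, s-3, \dots, 1$ to deduce $\depth M \ge s$.

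The main obstacle, as I see it, is part (i)'s inductive step: proving that finite generation of $H^1_\fkm$ is preserved when we cut by the next $C$-parameter. The naive local cohomology sequence only gives an injection into $H^2_\fkm$, which need not be finitely generated. I would handle this by exploiting that a $C$-system of parameters is a $d$-sequence (stated in the excerpt right after the definition) so that $(x_1,\dots,x_{i})M :_M x_{i+1}^n$ stabilizes and equals $(x_1,\dots,x_{i})M :_M \fkm^\infty$ intersected appropriately, making $0:_{M/(x_1,\dots,x_i)M} x_{i+1}$ a finite-length module; this finite-length kernel is exactly what controls whether $H^1$ stays finitely generated in the sequence $0\to (0:x_{i+1}) \to M/(x_1,\dots,x_i)M \xrightarrow{x_{i+1}} M/(x_1,\dots,x_i)M \to M/(x_1,\dots,x_{i+1})M\to 0$, split into two short exact sequences. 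Tracking the finite-generation through those two sequences, using $\dim(M/(x_1,\dots,x_{i+1})M) \ge 2$ to know $H^0$ is finite length, should close the induction; the bookkeeping there is the part I would be most careful with.
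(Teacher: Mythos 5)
Your overall architecture matches the paper's: pass to $\overline M := M/(x_1,\dots,x_{s-2})M$, compare it with its unmixed component, and climb back up the chain using the multiplication-by-$x_i$ sequences. But two genuine gaps remain. First, for part $i)$ the paper does not reprove anything: it simply cites Lemma 3.1 of \cite{GoN01} (equivalently Remark 3.3 of \cite{MoQ17}). Your proposed induction does not close: as you yourself note, the long exact sequence only places the new $H^1_\fkm$ in an extension involving $\ker\bigl(x_{i+1}\colon H^2_\fkm\to H^2_\fkm\bigr)$, and the finite length of $(0):_{M_i}x_{i+1}$ --- which is all the $d$-sequence/colon-capturing input you invoke --- says nothing about that kernel being finitely generated. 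So the ``real work'' you defer is exactly the content of the cited results, and your sketch supplies no mechanism for it; the honest move here is to quote the reference, as the paper does.

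Second, and more importantly for part $ii)$: the engine of the paper's argument is the ascent step (its Claim \ref{claim27}), which you only gesture at. Concretely, splitting multiplication by $x_i$ on $M_{i-1}=M/(x_1,\dots,x_{i-1})M$ into two short exact sequences and using $\dim\bigl((0):_{M_{i-1}}x_i\bigr)=0$ (which comes from Lemma \ref{lmc} $ii)$), one obtains $H^{j-1}_\fkm(M_i)\to H^j_\fkm(M_{i-1})\xrightarrow{x_i}H^j_\fkm(M_{i-1})\to H^j_\fkm(M_i)$. For $j\ge 2$ the vanishing of $H^{j-1}_\fkm(M_i)$ makes $x_i$ injective on $H^j_\fkm(M_{i-1})$, and since every element there is killed by a power of $x_i$, that module vanishes; for $j=1$ one instead uses surjectivity of $x_i$ together with Nakayama's lemma, which is precisely where the finite generation from part $i)$ enters. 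Your sketch never identifies this dichotomy (torsion argument for $j\ge 2$, Nakayama for $j=1$), nor the fact that part $i)$ is used in part $ii)$ through the $j=1$ case of the climb, so your concluding step ``$\depth M\ge s$'' is unsupported as written. A smaller point: you can get $\dim U_{\overline M}(0)=0$ directly from $\Ass \overline M\subseteq \Assh \overline M\cup\{\fkm\}$ (Lemma \ref{lmc} $ii)$) with no appeal to part $i)$ at the bottom level; the paper does exactly this and then reads off $H^1_\fkm(\overline M)=0$ immediately from the Cohen-Macaulayness of $M/N$. Your detour through the finite generation and artinianness of $H^1_\fkm(\overline M)$ does reach the same conclusion, but it is an unnecessary complication.
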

\begin{proof}
	$i)$ is Lemma 3.1 in \cite{GoN01} or Remark 3.3 in \cite{MoQ17}. \\ %\com{Lemma 3.1 in \cite{GoN01} is proven only the case where $M=R$.}\\
	$ii).$ For $0 \le i \le s-2$, let $\fkq_i=(x_1, x_2, \dots, x_{i})$ and $M_i=M/\fkq_i M$. (If $i=0$, then $\fkq_0=(0)$ and $M_0=M$.) Consider the exact sequence
	\[
	0 \to N/\fkq_{s-2} M \to M_{s-2} \to M/N \to 0.
	\]  
	Note that $\dim N/\fkq_{s-2} M=0$ since $\dim N/\fkq_{s-2} M < \dim M_{s-2}$ and $\Ass M_{s-2}\subseteq \Assh M_{s-2} \cup \{\fkm\}$ by Lemma \ref{lmc} $ii)$.  $M/N$ is a Cohen-Macaulay $R$-module of dimension $2$ by hypothesis. Hence, by applying the local cohomology functor $H_\fkm^-(-)$ to the above exact sequence, we obtain that 
	\begin{align}\label{eeq0}
	H_\fkm^1(M_{s-2})=0.
	\end{align}
	
	\begin{claim}\label{claim27}
	Suppose that $s \ge 3$. Let $1 \le i \le s-2$ and $1\le \ell$ be integers. Then, $H_\fkm^j(M_i)=0$ for all $1\le j \le \ell$ implies that $H_\fkm^j(M_{i-1})=0$ for all $1\le j \le \ell +1$.
	\end{claim}
	
	\begin{proof}[Proof of Claim \ref{claim27}]
	By the multiplication $M_{i-1} \xrightarrow{x_i} M_{i-1}$, we have 
	\begin{align}\label{eeq2}
	&0 \to (0):_{M_{i-1}} x_i \to M_{i-1} \to M_{i-1}/[(0):_{M_{i-1}} x_i] \to 0 \quad \text{and}\\ \label{eeq3}
	&0 \to  M_{i-1}/[(0):_{M_{i-1}} x_i] \to M_{i-1} \to M_i \to 0.
	\end{align}
By noting that $\Ass M_{i-1}\subseteq \Assh M_{i-1} \cup \{\fkm\}$ and $x_i$ is a part of a system of parameter of $M_{i-1}$, we have $\Ass ((0):_{M_{i-1}} x_i) \subseteq \{\fkm\}$, that is, $\dim ((0):_{M_{i-1}} x_i)=0$. Hence, by applying $H_\fkm^-(-)$ to \eqref{eeq2}, we obtain that 
\[
H_\fkm^j(M_{i-1}/[(0):_{M_{i-1}} x_i]) \cong H_\fkm^j(M_{i-1})
\]
for all $j>0$. Therefore, by applying $H_\fkm^-(-)$ to \eqref{eeq3}, it follows that 
\[
H_\fkm^{j-1}(M_i) \to H_\fkm^j(M_{i-1}) \xrightarrow{x_i} H_\fkm^j(M_{i-1}) \to H_\fkm^j(M_i)
\]
for all $j>0$. 

If $2 \le j \le \ell+1$, then $H_\fkm^{j-1}(M_i)=0$, that is, $0 \to  H_\fkm^j(M_{i-1}) \xrightarrow{x_i} H_\fkm^j(M_{i-1})$. On the other hand, for each element of $H_\fkm^j(M_{i-1})$, large enough powers of $x_i$ annihilate the element. Hence, the injection implies that $ H_\fkm^j(M_{i-1})=0$ for all $2\le j \le \ell+1$.

Let $j=1$. Then $H_\fkm^1(M_{i-1}) \xrightarrow{x_i} H_\fkm^1(M_{i-1}) \to 0$. Since $H_\fkm^1(M_{i-1})$ is finitely generated by i), by Nakayama's lemma, we obtain that $H_\fkm^1(M_{i-1})=0$.
	\end{proof}

Let us complete the proof of Lemma \ref{lemun}. If $s=2$, then $M$ is a Cohen-Macaulay $R$-module by \eqref{eeq0} and the assumption that $M$ is reduced. Suppose that $s \ge 3$. 
By using Claim \ref{claim27} recursively, we derive from \eqref{eeq0} that $H_\fkm^j(M)=0$ for all $1 \le j \le s-1$. On the other hand, by noting that $H_\fkm^0(M)=0$ since $M$ is reduced, $M$ is a Cohen-Macaulay $R$-module.
\end{proof}

\begin{rem}
 In \cite{Tru19,TaT20} the definition of Goto sequences on $M$ was introduced by the second author. Moreover, if $x_1, \ldots, x_s$ is a Goto sequence, then the statement  Lemma \ref{lemun} $ii)$ is true.  Note that, if $M$ is unmixed, a $C$-system of parameters of $M$ forms a Goto sequence on $M$. Thus applying Lemma 2.5 in \cite{Tru19} we also have Lemma \ref{lemun} $ii)$.    
\end{rem}

%%%%%%%%%%%%%%%%%%%%%%%%%%%%%%%%%%%%%%%%%%%%%%%%%%%%%%%%%%%%%%%%%%%%

 \section{ The bound of  the sectional genera}\label{section3}
 	
 	 In this section, we study the bound  of ${\rm sg}(\fkq, R)$ and  ${\rm sg}(\fkq: \fkm, R)$, where $\fkq$ runs over all $C$-parameter ideals of $R$. We maintain the assumption of the beginning of Section \ref{section2}. The goal of this section is to provide a characterization of a Cohen--Macaulay ring in terms of its sectional genera.  
 	  We begin with the following.

\begin{lem}\label{change}
 Assume that $\e_0(\fkm, R) > 1$ and $\fkq$ is a $C$-parameter ideal of  $R$. Then we have
 	  \begin{eqnarray*}
 		{\rm sg}(\fkq : \fkm, R) = \I(\fkq , R)  + \e_1(\fkq : \fkm, R) - \mathcal{N}(R).
 	\end{eqnarray*} 

\end{lem}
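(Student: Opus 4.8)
The plan is to reduce the asserted identity to two facts about the $C$-parameter ideal $\fkq$ — that $\ir_R(\fkq)=\mathcal N(R)$ and that $\e_0(\fkq:\fkm,R)=\e_0(\fkq,R)$ — and then to expand the definitions of ${\rm sg}(-,R)$ and $\I(-,R)$. First I would note that $\fkq:\fkm$ is $\fkm$-primary, so that $\e_1(\fkq:\fkm,R)$ and ${\rm sg}(\fkq:\fkm,R)$ are defined: since $\e_0(\fkm,R)>1$ the ring $R$ is not regular, so $\fkm$ cannot be generated by $d$ elements and hence no parameter ideal of $R$ equals $\fkm$; thus $\fkq\subsetneq\fkm$, giving $\fkq\subseteq\fkq:\fkm\subsetneq R$.

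The first fact is immediate from the material of Section \ref{section2}: for a $C$-parameter ideal $\fkq$ the index of reducibility $\ir_R(\fkq)=\ell_R((\fkq:\fkm)/\fkq)$ equals the stable value $\mathcal N(R)$ by Cuong--Quy \cite{CuQ20}. The second fact, $\e_0(\fkq:\fkm,R)=\e_0(\fkq,R)$, is the crux, and the only place the hypothesis $\e_0(\fkm,R)>1$ is genuinely used. Here I would invoke the known fact that $\fkm(\fkq:\fkm)=\fkm\fkq$ for a parameter ideal $\fkq$ in a non-regular local ring (this equality fails for regular rings, which is why the lemma needs $\e_0(\fkm,R)>1$). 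Given it, each $x\in\fkq:\fkm\subseteq\fkm$ satisfies $x\fkm\subseteq\fkq\fkm$, so the determinantal trick applied to the finitely generated $R$-module $\fkm$ produces an element $x^n+c_1x^{n-1}+\cdots+c_n\in(0:_R\fkm)$ with $c_i\in\fkq^i$; multiplying this relation by $x\in\fkm$ kills its left-hand side and exhibits $x$ as integral over $\fkq$. Thus $\fkq\subseteq\fkq:\fkm\subseteq\overline\fkq$, so all three ideals have the same integral closure, hence the same multiplicity, $\e_0(\fkq,R)=\e_0(\fkq:\fkm,R)$. I expect this step — together with locating a clean reference for $\fkm(\fkq:\fkm)=\fkm\fkq$ — to be the only real obstacle.

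With these in hand the computation is bookkeeping. Using additivity of length along $\fkq\subseteq\fkq:\fkm$ and the two facts,
\[
{\rm sg}(\fkq:\fkm,R)=\ell_R(R/(\fkq:\fkm))-\e_0(\fkq:\fkm,R)+\e_1(\fkq:\fkm,R)=\bigl(\ell_R(R/\fkq)-\mathcal N(R)\bigr)-\e_0(\fkq,R)+\e_1(\fkq:\fkm,R),
\]
and the right-hand side equals $\bigl(\ell_R(R/\fkq)-\e_0(\fkq,R)\bigr)+\e_1(\fkq:\fkm,R)-\mathcal N(R)=\I(\fkq,R)+\e_1(\fkq:\fkm,R)-\mathcal N(R)$, which is the claim.
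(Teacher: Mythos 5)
Your proposal is correct and follows essentially the same route as the paper: both arguments reduce the identity to the two facts $\ell_R((\fkq:\fkm)/\fkq)=\ir_R(\fkq)=\mathcal{N}(R)$ and $\e_0(\fkq:\fkm,R)=\e_0(\fkq,R)$, and then expand the definitions of ${\rm sg}$ and $\I$. For the multiplicity equality the paper simply cites Proposition 2.3 of \cite{GoN03} together with Proposition 11.2.1 of \cite{SwH06}, which is precisely the $\fkm(\fkq:\fkm)=\fkm\fkq$ / determinantal-trick / integral-closure argument you sketch (with the hypothesis stated as $\e_0(\fkm,R)>1$ rather than mere non-regularity), so the one ``obstacle'' you anticipate is resolved by those references.
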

\begin{proof}
Put $J = \fkq : \fkm$.  Since $\e_0(\fkm, R) > 1$, it follows from by Proposition 2.3 in \cite{GoN03} and Proposition 11.2.1 in \cite{SwH06} that $\e_0(J, R) = \e_0(\fkq , R)$.   Since $\fkq$ is a $C$-parameter ideal, by the definition of the stable value, we have
 	  \begin{eqnarray*}
 		{\rm sg}(J, R) &=&  \ell(R/J) - \e_0(J, R) + \e_1(J, R) \\
 		&=& \ell(R/\fkq ) - \e_0(\fkq , R) + \e_1(J, R) - \ell(\fkq:\fkm /\fkq)\\
 		&=& \I(\fkq , R)  + \e_1(J, R) - \mathcal{N}(R),
 	\end{eqnarray*} 
as required.
\end{proof}
 	 
 %%%%%%%%%%%%%%%%%%%%%%%%%%%%%%%%%%%%%%%%%%%%%%%%%%%%%%%%%%%%%%	

 \begin{thm} \label{thmgg1}
 	Suppose that  $\dim R =  1$ and $\e_0(\fkm, R) > 1$. 
 	%Assume that $R$ is a homomorphic image of a Cohen-Macaulay local ring
 	%with $\dim R =  1$ and $\e_0(\fkm, R) > 1$. 
 	Then the following statements are equivalent.
 	\begin{enumerate}[$i)$]
 		\item $R$ is Cohen--Macaulay.
 		\item There exists a $C$-parameter ideal $\fkq \subseteq \fkm^{\gr(R)}$ of $R$ such that  
 		$$ {\rm sg}(\fkq: \fkm, R) \geq 0.$$
 	\end{enumerate}
 \end{thm}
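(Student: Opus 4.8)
The plan is to reduce everything to the one-dimensional invariants collected in Lemma \ref{fact2} and Lemma \ref{change}. In dimension $1$ the ring $R$ is automatically generalized Cohen--Macaulay (indeed $H^0_\fkm(R)$ has finite length), so Lemma \ref{fact2} applies to any $C$-parameter ideal $\fkq$. First I would note that since $\e_0(\fkm,R)>1$, by Lemma \ref{change} we have
\[
{\rm sg}(\fkq:\fkm,R) = \I(\fkq,R) + \e_1(\fkq:\fkm,R) - \mathcal{N}(R).
\]
By Lemma \ref{fact2}$(i)$ with $d=1$ we get $\I(\fkq,R)=h_0(R)$; by $(ii)$ we get $\mathcal{N}(R)=r_0(R)+r_1(R)$; and by $(iii)$ (valid since $R$ is not regular — which holds because $\e_0(\fkm,R)>1$) we get $\e_1(\fkq:\fkm,R)=h_0(R)-r_1(R)$. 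Substituting, ${\rm sg}(\fkq:\fkm,R) = h_0(R) + (h_0(R)-r_1(R)) - (r_0(R)+r_1(R)) = 2h_0(R) - 2r_1(R) - r_0(R)$. Since $H^0_\fkm(R)\neq 0$ iff $h_0(R)>0$ iff $r_0(R)>0$, and always $r_1(R)\ge 1$, $r_0(R)\ge 1$ once $h_0(R)>0$, while if $h_0(R)=0$ then $r_0(R)=0$ and the quantity is $-2r_1(R)\le 0$... this needs care, so let me restructure.

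A cleaner route: observe that for $d=1$, $R$ is Cohen--Macaulay iff $H^0_\fkm(R)=0$ iff $h_0(R)=0$ iff $r_0(R)=0$. For the implication $(i)\Rightarrow(ii)$: if $R$ is Cohen--Macaulay then $h_0(R)=r_0(R)=0$, so by the computation above (or directly from the Remark following Lemma \ref{fact2}, adapted to $d=1$) ${\rm sg}(\fkq:\fkm,R)=\e_1(\fkq:\fkm,R)=r_1(R)\ge 0$ for every $C$-parameter ideal, in particular for one contained in $\fkm^{\gr(R)}$ — such ideals exist because $C$-systems of parameters contained in any given power of $\fkm$ always exist. For $(ii)\Rightarrow(i)$: suppose $R$ is not Cohen--Macaulay, so $h_0(R)=\ell_R(H^0_\fkm(R))>0$. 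I must show ${\rm sg}(\fkq:\fkm,R)<0$ for every $C$-parameter ideal $\fkq$. Using ${\rm sg}(\fkq:\fkm,R)=2h_0(R)-2r_1(R)-r_0(R)$ is not obviously negative, so instead I would argue more directly: $H^0_\fkm(R)$ is a nonzero submodule of $R$ killed by some power of $\fkm$, and for a $C$-parameter ideal $\fkq=(x)$ the element $x$ is a $d$-sequence, so $(0):_R x = H^0_\fkm(R)$ and $\fkq:\fkm/\fkq$ relates to $r_1$ of the Cohen--Macaulayfication; the point is that ${\rm sg}(\fkq:\fkm,R)$ equals $\I(\fkq,R)+\e_1(\fkq:\fkm,R)-\mathcal{N}(R)$ and one checks term by term that this is $\le \e_1(\fkq:\fkm,R)+h_0(R)-(r_0(R)+r_1(R))$; since $r_1(R)\ge\e_1(\fkq:\fkm,R)$ would fail...

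Given the delicacy, the honest plan is: substitute the three formulas from Lemma \ref{fact2} into Lemma \ref{change} to get the closed form ${\rm sg}(\fkq:\fkm,R) = 2h_0(R) - 2r_1(R) - r_0(R)$, valid for any $C$-parameter ideal in the non-regular case; then observe that $r_0(R) = \dim_k\Soc(H^0_\fkm(R))$ and, crucially, for a module of finite length $N=H^0_\fkm(R)$ one has $\ell_R(N)\ge \ell_R(\Soc N) \cdot$(something) — in fact the elementary bound $h_0(R) = \ell_R(H^0_\fkm(R)) \le$ ... I would instead invoke that $H^0_\fkm(R)$ nonzero forces $\mathcal{N}(R) > \I(\fkq,R) + \e_1(\fkq:\fkm,R)$ by comparing with the Cohen--Macaulay case, or simply cite the known inequality ${\rm sg}(I,R)\ge 0$ holds iff depth is positive in dimension one. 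The main obstacle is precisely this $(ii)\Rightarrow(i)$ direction: turning "$h_0(R)>0$" into the strict inequality "${\rm sg}(\fkq:\fkm,R)<0$", which requires the structural fact that $r_1(R)-\e_1(\fkq:\fkm,R) \ge h_0(R)$, equivalently (by the formula) that $2h_0(R)-2r_1(R)-r_0(R)<0$ whenever $h_0(R)>0$; I expect the author establishes this via the inequality $r_1(R) \ge h_0(R)$ together with $r_0(R)\ge 1$ (the latter because a nonzero finite-length module has nonzero socle), which indeed gives $2h_0(R) - 2r_1(R) - r_0(R) \le 2h_0(R) - 2h_0(R) - 1 = -1 < 0$. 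So the key lemma to nail down is $r_1(R) \ge h_0(R)$ for a one-dimensional local ring, which should follow from applying $\Hom_R(k,-)$ to $0\to H^0_\fkm(R)\to R/\fkq R\to R/(\fkq:\fkm)R\to 0$ type sequences, or from Lemma \ref{fact2} consistency, and that is where I would concentrate the technical effort.
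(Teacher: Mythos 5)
There is a genuine error, and it is a sign error in your application of Lemma \ref{fact2}~$iii)$. For $d=1$ the relevant case is $i=d$, which reads $\e_d(\fkq:\fkm,R)=(-1)^d\bigl(h_0(R)-r_1(R)\bigr)$; with $d=1$ this is $\e_1(\fkq:\fkm,R)=r_1(R)-h_0(R)$, not $h_0(R)-r_1(R)$ as you wrote. Substituting the correct value into Lemma \ref{change} gives
\[
{\rm sg}(\fkq:\fkm,R)=\I(\fkq,R)+\e_1(\fkq:\fkm,R)-\mathcal{N}(R)=h_0(R)+\bigl(r_1(R)-h_0(R)\bigr)-\bigl(r_0(R)+r_1(R)\bigr)=-r_0(R),
\]
and the theorem is then immediate in both directions: ${\rm sg}(\fkq:\fkm,R)\ge 0$ forces $r_0(R)=0$, hence $H^0_\fkm(R)=0$ (a nonzero finite-length module has nonzero socle) and $R$ is Cohen--Macaulay; conversely if $R$ is Cohen--Macaulay then $r_0(R)=0$ and ${\rm sg}(\fkq:\fkm,R)=0$. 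This closed form $-r_0(R)$ is exactly what the paper uses.

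Your wrong closed form $2h_0(R)-2r_1(R)-r_0(R)$ is what sends the rest of the argument off the rails: you correctly sense that its negativity is not obvious, and you end up proposing to prove the auxiliary inequality $r_1(R)\ge h_0(R)$ as the ``key lemma.'' That inequality is not needed, is not established anywhere in your sketch, and is not something you should expect to hold in general (there is no a priori comparison between the socle of $H^1_\fkm(R)$ and the length of $H^0_\fkm(R)$ for an arbitrary one-dimensional local ring; the theorem does not assume unmixedness). Note also that your computation in the direction $i)\Rightarrow ii)$ is internally inconsistent with your own formula: in the Cohen--Macaulay case you claim ${\rm sg}(\fkq:\fkm,R)=\e_1(\fkq:\fkm,R)=r_1(R)$, forgetting to subtract $\mathcal{N}(R)=r_1(R)$; the correct value is $0$, which still satisfies the inequality but for the wrong reason. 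Fixing the single sign collapses the entire proof to three lines.
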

 \begin{proof}	
 $i) \Rightarrow ii)$. This immediately follows from the assumption that $R$ is Cohen-Macaulay by applying Lemma \ref{change}.\\
 	$ii) \Rightarrow i)$. Since $\dim R=1$, $R$ is generalized Cohen-Macaulay. It follows from  Lemma \ref{fact2}  and  Lemma \ref{change} that we have $$ - r_0(R)={\rm sg}(\fkq: \fkm, R) \ge 0.$$ Thus $r_0(R) = 0$. Hence $R$ is Cohen-Macaulay, as required.
 \end{proof}
  
  Note that in the case where $\dim R = 1$, we have ${\rm sg}(\fkq, R) = 0$ for every $C$-parameter ideal $\fkq$ of $R$ by Lemma 3.1 in \cite{GoS03}. Therefore, we have $ {\rm sg}(\fkq: \fkm, R) \leq {\rm sg}(\fkq, R)$ for every $C$-parameter ideal $\fkq$ of $R$.
   In the case where $\dim R \geq 2$, the following result shows that  the inequality $ {\rm sg}(\fkq: \fkm, R) \leq {\rm sg}(\fkq, R)$ still holds true for every $C$-parameter ideal of $R$, provided $R$ is unmixed.

\begin{thm} \label{thmgg}
Assume that $R$ is a non-regular unmixed local ring of dimension   $d=\dim R \ge 2$. Then the following statements are equivalent.
	\begin{enumerate}[$i)$]
\item $R$ is Cohen--Macaulay.
\item There exists a $C$-parameter ideal $\fkq$ such that 
$$ {\rm sg}(\fkq: \fkm, R) \geq {\rm sg}(\fkq, R).$$
	\end{enumerate}
\end{thm}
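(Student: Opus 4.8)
The plan is to prove the two implications separately; the non-trivial one, $ii)\Rightarrow i)$, I would reduce to the case $\dim R=2$ by cutting with part of a $C$-system of parameters and then compute everything through Remark \ref{rem2.4}. For $i)\Rightarrow ii)$: since $R$ is non-regular and unmixed, its completion is formally equidimensional and non-regular, so $\e_0(\fkm,R)>1$ by Nagata's theorem; hence Lemma \ref{change} is available for every $C$-parameter ideal $\fkq$. When $R$ is Cohen--Macaulay, Lemma \ref{fact2} gives $\I(\fkq,R)=0$, $\mathcal{N}(R)=r_d(R)=\e_1(\fkq:\fkm,R)$ and $\e_1(\fkq,R)=0$, so Lemma \ref{change} yields
\[
{\rm sg}(\fkq:\fkm,R)=\I(\fkq,R)+\e_1(\fkq:\fkm,R)-\mathcal{N}(R)=0=\I(\fkq,R)+\e_1(\fkq,R)={\rm sg}(\fkq,R),
\]
so any $C$-parameter ideal witnesses $ii)$, in fact with equality.

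\emph{For $ii)\Rightarrow i)$: reduction to dimension two.} Fix a $C$-parameter ideal $\fkq=(x_1,\dots,x_d)$ with ${\rm sg}(\fkq:\fkm,R)\ge{\rm sg}(\fkq,R)$; note again $\e_0(\fkm,R)>1$. Set $\overline R:=R/(x_1,\dots,x_{d-2})R$ (so $\overline R=R$ when $d=2$). By Lemma \ref{lmc}, $x_{d-1},x_d$ is a $C$-system of parameters of $\overline R$ and $\mathcal{N}(R)=\mathcal{N}(\overline R)$, and $\overline R$ is generalized Cohen--Macaulay of dimension $2$: when $d\ge 3$ because $H^1_\fkm(\overline R)$ is finitely generated (Lemma \ref{lemun} $i)$) and $H^0_\fkm(\overline R)$ has finite length (Lemma \ref{lmc} $ii)$), and when $d=2$ because an unmixed local ring of dimension $2$ is Cohen--Macaulay on its punctured spectrum. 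Since $(x_1,\dots,x_{d-2})\subseteq\fkq\subseteq\fkq:\fkm$, one checks directly that $(\fkq:_R\fkm)\overline R=\fkq\overline R:_{\overline R}\fkm$, so $d-2$ applications of Lemma \ref{lmsg} $ii)$ — all in the range $s\ge 3$, where no correction term occurs — to the ideals $\fkq$ and $\fkq:\fkm$ give
\[
{\rm sg}(\fkq,R)={\rm sg}(\fkq\overline R,\overline R)\qquad\text{and}\qquad {\rm sg}(\fkq:\fkm,R)={\rm sg}(\fkq\overline R:_{\overline R}\fkm,\overline R).
\]

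\emph{The two-dimensional computation.} If $\e_0(\fkm,\overline R)=1$, then $\overline R/U_{\overline R}(0)$ is regular (Nagata, as it is unmixed of dimension $2$), hence Cohen--Macaulay, and Lemma \ref{lemun} $ii)$ already gives that $R$ is Cohen--Macaulay. So assume $\e_0(\fkm,\overline R)>1$, whence $\overline R$ is non-regular; then Lemma \ref{change} applied to $\overline R$, combined with Remark \ref{rem2.4}, gives ${\rm sg}(\fkq\overline R:_{\overline R}\fkm,\overline R)=h_0(\overline R)-r_0(\overline R)-r_1(\overline R)$ and ${\rm sg}(\fkq\overline R,\overline R)=h_0(\overline R)$, so
\[
0\ \le\ {\rm sg}(\fkq:\fkm,R)-{\rm sg}(\fkq,R)\ =\ -\bigl(r_0(\overline R)+r_1(\overline R)\bigr).
\]
Hence $r_0(\overline R)=r_1(\overline R)=0$, i.e.\ $H^0_\fkm(\overline R)=H^1_\fkm(\overline R)=0$, so $\overline R$ is Cohen--Macaulay of dimension $2$; its unmixed component is then $0$, and Lemma \ref{lemun} $ii)$ (applied with $N=(x_1,\dots,x_{d-2})R$) shows that $R$ is Cohen--Macaulay.

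\emph{Main obstacle.} The delicate point is the dimension reduction for the ideal $\fkq:\fkm$: invoking Lemma \ref{lmsg} $ii)$ there presupposes that the successive elements of the $C$-system are superficial for $\fkq:\fkm$ (in the relevant quotients), not merely for $\fkq$. This I would obtain from the facts that a $C$-system of parameters consists of superficial elements for the parameter ideal it generates and that $\fkq$ is a minimal reduction of $\fkq:\fkm$ — equivalently $\e_0(\fkq,R)=\e_0(\fkq:\fkm,R)$, which holds because $\e_0(\fkm,R)>1$ — so that superficiality transfers from $\fkq$ to $\fkq:\fkm$. The remaining bookkeeping, namely the degenerate configurations in which $\overline R$ or one of its hyperplane sections is regular, is absorbed into the $\e_0(\fkm,\overline R)=1$ case treated above.
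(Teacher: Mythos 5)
Your argument is correct and follows essentially the same route as the paper's proof: reduce to the two-dimensional quotient $A=R/(x_1,\dots,x_{d-2})$ via superficiality of the $C$-system of parameters, compute ${\rm sg}(\fkq A,A)=h_0(A)$ and ${\rm sg}(\fkq A:\fkm A,A)=h_0(A)-r_0(A)-r_1(A)$ from the generalized Cohen--Macaulay formulas, and conclude $r_0(A)=r_1(A)=0$ before lifting back with Lemma \ref{lemun}. The only differences are cosmetic: you justify explicitly the superficiality of the $x_i$ for $\fkq:\fkm$ and the degenerate case $\e_0(\fkm,A)=1$, which the paper handles by citing Lemma \ref{lmc} and Lemma \ref{lmsup} directly.
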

\begin{proof}
	Let $\fkq$ be a $C$-parameter ideal of $R$ and put $J = \fkq : \fkm$. Since $R$ is to be regular if $R$ is unmixed and $\e_0(\fkm, R) =1$ (see \cite[Theorem 40.6]{Nag62}), we have $\e_0(\fkm, R) > 1$. 
	%it is sufficient for us to prove this result under the addition hypothesis that  $\e_0(\fkm, R) > 1$. 
	
$i) \Rightarrow ii)$. This immediately follows from the assumption that $R$ is Cohen-Macaulay by applying Lemma \ref{fact2} and Lemma \ref{change}.

$ii) \Rightarrow i)$. Let $\underline{x} = x_1, x_2, \ldots , x_d$ be a $C$-system of parameters of $R$ such that $\fkq = (x_1, x_2, \ldots , x_d)$. Put $R_i = R/(x_1, \ldots, x_i)$ for $i = 1, \ldots, d-2$. By Lemma \ref{lmc}, for all $i = 1,\dots, d - 2$, we have
\begin{enumerate}[\rm(a)]
	\item $JR_i = \fkq R_i: \fkm R_i$,
	\item $x_i$ is a superficial element of $R_{i-1}$ with respect to $\fkq R_{i-1}$ and $JR_{i-1}$,
	%\item $\Ass R/\fkq R_i \subseteq \Assh R/\fkq R_i \cup \{\fkm\},$
	\item  $ \mathcal{N}_R(R_i) = \mathcal{N}_R(R_{i-1}).$
\end{enumerate}
 Now, we put $A = R_{d-2}$. By Lemma \ref{lmsup} we have $\e_i(\fkq, R) = \e_i(\fkq A, A)$ and $\e_i(J, R) = \e_i(J A, A) = \e_i(\fkq A: \fkm A, A)$, for all $i = 0, 1$. Therefore 
 we have
 \begin{eqnarray}\label{eq1}
 	 {\rm sg}(\fkq, R)= \ell(R/\fkq) - \e_0(\fkq, R) + \e_1(\fkq, R) = \ell(A/\fkq A) - \e_0(\fkq A, A) + \e_1(\fkq A, A) =	{\rm sg}(\fkq A, A) 
 \end{eqnarray}  
 and 
 \begin{eqnarray}\label{eq2}
 	\begin{split}
 	{\rm sg}(J, R) &=  \ell(R/J) - \e_0(J, R) + \e_1(J, R) \\
 	&= \ell(R/\fkq ) - \e_0(\fkq , R) + \e_1(J, R) - \ell(\fkq:\fkm /\fkq)\\
 	&= \ell(R/\fkq ) - \e_0(\fkq , R)+ \e_1(J, R) - \mathcal{N}(R)\\
 	&=   \ell(A/\fkq A ) - \e_0(\fkq A , A) + \e_1(JA, A) - \mathcal{N}(A).
 \end{split}
 \end{eqnarray} 
 
 On the other hand, it follows from Lemma \ref{lmc} $ii)$ that  $A/H_{\fkm}^{0}(A)$ is unmixed with $\dim A = 2$. Thus  by Lemma \ref{lemun} $i)$, $A/H_{\fkm}^{0}(A)$ is  generalized Cohen--Macaulay  and so $A$ is.  Moreover, it  follows from Lemma \ref{lmsup} that  $\e_0(\fkm A, A) = \e_0(\fkm , R) > 1$. By Lemma \ref{fact2}, \eqref{eq1}, and  \eqref{eq2},  we obtain that 
 \begin{eqnarray}\label{eq3}
	\begin{split}
	{\rm sg}(J, R) &= \I(\fkq A, A)  + \e_1(JA, A) - \mathcal{N}(A) = h_0(A)- r_0(A)  - r_1(A) 
\quad \text{and}\\ 
 {\rm sg}(\fkq, R)&= \I(\fkq A, A) + \e_1(\fkq A, A) 
	=  h_0(A).
	 \end{split}
\end{eqnarray} 
    Since   $ {\rm sg}(J, R) \geq {\rm sg}(\fkq, R),$ we have $r_0(A) = r_1(A) = 0$. Therefore, $A$ is Cohen--Macaulay. Hence  $R$ is Cohen-Macaulay by applying Lemma \ref{lemun}. 
\end{proof}
%%%%%%%%%%%%%%%%%%%%%%%%%%%%%%%%%%%%%%%%%%%%%%%%%%%%%%%%%%%%%%%%%%%5

%

%The following corollary gives the relationship between $ {\rm sg}(\fkq: \fkm, R)$ and $ {\rm sg}(\fkq, R)$, where $\fkq$ is a $C$-parameter of $R$. Moreover, when $\dim R = 1$, it follows from Lemma 3.1 in \cite{GoO16} we get an upper bound for $ {\rm sg}(\fkq: \fkm, R)$, where $\fkq$ runs over all parameters of $R$.

%\begin{cor}\label{corgg}
%Assume that $\dim R = 1$ and $e_1(\fkm, R)> 1$ or $\dim R \ge 2$ and $R$ is unmixed. Then for all $C$-parameter ideal $\fkq$ of $R$, we have
%$$ {\rm sg}(\fkq: \fkm, R) \leq {\rm sg}(\fkq, R),$$
%and the equality occurs if and only if $R$ is Cohen-Macaulay. Moreover, in the case $\dim R = 1$, we have
%$$ {\rm sg}(\fkq:\fkm, R) \leq 0,$$
%and the equality occurs if and only if $R$ is Cohen-Macaulay.
%\end{cor}

%%%%%%%%%%%%%%%%%%%%%%%%%%%%%%%%%%%%%%%%%%%%%%%%%%%%%%%%%%%%%%%%%%

%%%%%%%%%%%%%%%%%%%%%%%%%%%%%%%%%%%%%%%%%%%%%%%%%%%%%%%%%%%%%%%%%

For each integer $n\geq 1$, we denote by ${\underline x}^n$ the sequence $x^n_1, x^n_2,\ldots,x^n_d$. Let $K^{\bullet}(x^n)$ be the
Koszul complex of $R$ generated by the sequence ${\underline x}^n$ and let
$H^{\bullet}({\underline x}^n;R) = H^{\bullet}(\Hom_R(K^{\bullet}({\underline x}^n),R))$
be the Koszul cohomology module of $R$. Then for every $p\in\Bbb Z$, the family $\{H^p({\underline x}^n;R)\}_{n\ge 1}$
naturally forms an inductive system of $R$, whose limit
$$H^p_\fkq(R)=\lim\limits_{n\to\infty} H^p({\underline x}^n;R)$$
is isomorphic to the local cohomology module
$$H^p_\fkm(R)=\lim\limits_{n\to\infty} {\rm Ext}_R^p(R/\fkm^n,R).$$
For each $n\geq 1$ and $p \in\mathbb{Z}$, let $\phi^{p,n}_{{\underline x},R}:H^p({\underline x}^n;R)\to H^p_\fkm(R)$ denote the canonical
homomorphism into the limit. 
\begin{defn}[{\cite[Lemma 3.12]{GoS03}}]\label{sur}
	{\rm There exists an integer $n_0$ 
		such that for all systems of parameters ${\underline x}=x_1,\ldots,x_d$  for $R$ contained in $\fkm^{n_0}$ and for all $p\in \Bbb Z$, the canonical homomorphisms
		$$\phi^{p,1}_{{\underline x},R}:H^p({\underline x},R)\to H^p_\fkm(R)$$
		into the inductive limit are surjective on the socles.
		The least integer $n_0$ with this property is called a \textit{g-invariant} of $R$ and denote by $\gr(R)$.}
\end{defn}

The following result shows that ${\rm sg}(\fkq: \fkm, R)$ and ${\rm sg}(\fkq, R)$ are bounded below by the same finite value, where $\fkq$ runs over all $C$-parameters ideals of $R$ contained in $\fkm^{\gr(R)}$.

%%%%%%%%%%%%%%%%%%%%%%%%%%%%%%%%%%%%%%%%%%%%%%%%%%%%%%%%%%%%%%%%
\begin{thm}\label{thmggn}
Assume that $R$ is a non-regular unmixed local ring of dimension   $d=\dim R \ge 2$.    Then the following statements are equivalent.
	\begin{enumerate}[$i)$]
\item $R$ is Cohen--Macaulay.
\item There exists a $C$-parameter ideal $\fkq \subseteq \fkm^{\gr(R)}$ of $R$ such that  
$$ \rr(R) - \mathcal{N}(R)\ge {\rm sg}(\fkq, R).$$
\item There exists a $C$-parameter ideal $\fkq \subseteq \fkm^{\gr(R)}$ of $R$ such that  
$$ \rr(R) - \mathcal{N}(R)\ge {\rm sg}(\fkq: \fkm, R).$$
%$$ \ell_R(R/\fkq) -\e_0(J, R) +\e_1(J,R) \le \rr(R),$$
	\end{enumerate}
%where $J = \fkq: \fkm$. 
\end{thm}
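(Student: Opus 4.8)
The strategy is to reduce everything to the $2$-dimensional generalized Cohen–Macaulay situation already set up in the proof of Theorem \ref{thmgg}, and then to read off the relevant invariants from Lemma \ref{fact2} (equivalently Remark \ref{rem2.4}). First, since $R$ is unmixed with $\e_0(\fkm,R)=1$ forcing regularity by \cite[Theorem 40.6]{Nag62}, the hypothesis that $R$ is non-regular gives $\e_0(\fkm,R)>1$, so Lemma \ref{change} applies to every $C$-parameter ideal $\fkq$ of $R$. The implication $i)\Rightarrow ii)$ and $i)\Rightarrow iii)$ are the easy direction: when $R$ is Cohen–Macaulay, the Remark following Lemma \ref{fact2} gives $\mathcal{N}(R)=\rr(R)=r_d(R)$, ${\rm sg}(\fkq,R)=0$, and ${\rm sg}(\fkq:\fkm,R)=\ell(R/(\fkq:\fkm))-\e_0(\fkq,R)+\e_1(\fkq:\fkm,R)=0$ (since $\I(\fkq,R)=0$ and $\e_1(\fkq:\fkm,R)=r_d(R)=\mathcal{N}(R)$ by Lemma \ref{change}), so both inequalities read $0\ge 0$; here any $C$-parameter ideal contained in $\fkm^{\gr(R)}$ works, and such ideals exist because $C$-systems of parameters exist and one may raise them to high powers.

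For the converse directions $ii)\Rightarrow i)$ and $iii)\Rightarrow i)$, I would follow verbatim the reduction in the proof of Theorem \ref{thmgg}: choose a $C$-system of parameters $\underline{x}=x_1,\dots,x_d$ generating the given $\fkq$, set $R_i=R/(x_1,\dots,x_i)$ and $A=R_{d-2}$, and use Lemma \ref{lmc} together with Lemma \ref{lmsup} to get $\e_0(\fkq,R)=\e_0(\fkq A,A)$, $\e_i(\fkq,R)=\e_i(\fkq A,A)$ and $\e_i(\fkq:\fkm,R)=\e_i(\fkq A:\fkm A,A)$ for $i=0,1$, as well as $\mathcal{N}_R(R)=\mathcal{N}_R(A)$. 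By Lemma \ref{lemun} $i)$ and Lemma \ref{lmc} $ii)$, $A$ is generalized Cohen–Macaulay of dimension $2$ with $\e_0(\fkm A,A)>1$. Then by the same computation \eqref{eq3} in that proof, ${\rm sg}(\fkq,R)={\rm sg}(\fkq A,A)=h_0(A)$ and ${\rm sg}(\fkq:\fkm,R)=h_0(A)-r_0(A)-r_1(A)$, while $\rr(R)-\mathcal{N}(R)=r_d(R)-\mathcal{N}(R)$; I would also need to relate $\rr(R)=\dim_k((0):_{H^d_\fkm(R)}\fkm)$ to the $2$-dimensional data, namely $\rr(R)=r_d(R)=r_2(A)$ (this follows because $H^d_\fkm(R)\cong H^2_\fkm(A)$ for a $C$-system of parameters, or by the standard fact that the Cohen–Macaulay type is preserved modulo a superficial element of a $C$-system of parameters — this is the one bookkeeping point I would want to state carefully). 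With Remark \ref{rem2.4} $ii)$ giving $\mathcal{N}(R)=r_0(A)+2r_1(A)+r_2(A)$, the hypothesis $ii)$ becomes
\[
r_2(A)-\big(r_0(A)+2r_1(A)+r_2(A)\big)\ \ge\ h_0(A),
\]
i.e. $-r_0(A)-2r_1(A)\ge h_0(A)\ge 0$, forcing $r_0(A)=r_1(A)=0$ (hence also $h_0(A)=0$). Likewise $iii)$ becomes $-r_0(A)-2r_1(A)\ge h_0(A)-r_0(A)-r_1(A)$, i.e. $-r_1(A)\ge h_0(A)\ge 0$, again giving $r_1(A)=0$ and then $r_0(A)=h_0(A)=0$. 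Either way $A$ is Cohen–Macaulay, and Lemma \ref{lemun} $ii)$ promotes this to $R$ being Cohen–Macaulay.

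The main obstacle is the small-dimensional edge case and the identification $\rr(R)=r_2(A)$: one must make sure the reduction to $A$ of dimension $2$ is legitimate (i.e. $d\ge 2$ is genuinely used, and when $d=2$ one takes $A=R$ directly), and that the Cohen–Macaulay type behaves correctly under the reduction, since $\rr$ is defined via top local cohomology and I am cutting down by a sequence that is only a $d$-sequence, not regular. Once that identification is in hand (it is the $d$-dimensional analogue of the computations collected in Remark \ref{rem2.4} and Lemma \ref{fact2}, and is implicitly what makes Theorem \ref{main1} consistent), the rest is the linear-inequality juggling above. I would also remark that combining Theorem \ref{thmgg} with Theorem \ref{thmggn} yields the chain $\rr(R)-\mathcal{N}(R)\le {\rm sg}(\fkq:\fkm,R)\le {\rm sg}(\fkq,R)$ of Corollary \ref{corg}, with equality in either place characterizing Cohen–Macaulayness, since the computations \eqref{eq3} show the common lower bound $h_0(A)-r_0(A)-r_1(A)$ is always $\le \rr(R)-\mathcal{N}(R)=-r_0(A)-2r_1(A)$ precisely up to the nonnegative quantity $h_0(A)+r_1(A)$.
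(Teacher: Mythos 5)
Your plan follows the paper's proof essentially step for step: reduce to the two-dimensional generalized Cohen--Macaulay quotient $A=R/(x_1,\dots,x_{d-2})$, compute ${\rm sg}(\fkq,R)=h_0(A)$ and ${\rm sg}(\fkq:\fkm,R)=h_0(A)-r_0(A)-r_1(A)$ exactly as in \eqref{eq3}, compare with $\mathcal{N}(R)=\mathcal{N}(A)=r_0(A)+2r_1(A)+r_2(A)$, and lift Cohen--Macaulayness of $A$ back to $R$ via Lemma \ref{lemun}. The one point you flag as needing care is indeed the only place your justification would not survive scrutiny: the identification $\rr(R)=r_2(A)$ via an isomorphism $H^d_\fkm(R)\cong H^2_\fkm(A)$ is not true in general (top local cohomology is not preserved when cutting by parameters, and the Cohen--Macaulay type need not be preserved either). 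What the paper actually uses is only the one-sided inequality $\rr(R)\le r_2(A)$, quoted from Lemma 3.5 of \cite{OTY21}, and this is precisely where the hypothesis $\fkq\subseteq\fkm^{\gr(R)}$ enters (surjectivity on socles of $H^p(\underline{x},R)\to H^p_\fkm(R)$); your write-up never explains what role $\gr(R)$ plays. Fortunately the inequality points in the right direction for your argument: from $\rr(R)-\mathcal{N}(R)\le r_2(A)-\mathcal{N}(A)=-r_0(A)-2r_1(A)$ together with hypothesis $ii)$ or $iii)$ you still force $r_0(A)=r_1(A)=h_0(A)=0$, so replacing your claimed equality by the cited inequality repairs the proof without changing anything else. (A cosmetic difference: the paper closes the cycle as $i)\Rightarrow ii)\Rightarrow iii)\Rightarrow i)$, deducing $ii)\Rightarrow iii)$ from Theorem \ref{thmgg}, whereas you prove both converse implications directly; both are fine.)
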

\begin{proof}
$i) \Rightarrow ii)$. Since $R$ is Cohen-Macaulay, by Lemma \ref{fact2} {\it ii}), we have
$$  \rr(R) - \mathcal{N}(R) =\rr(R) - \rr_d(R) = 0$$
for all  $C$-parameter ideals $\fkq$ of $R$.\\
$ii) \Rightarrow iii)$  follows from Theorem  \ref{thmgg}.\\
$iii) \Rightarrow i)$. 
Since $R$ is to be regular if $R$ is unmixed and $\e_0(\fkm, R) =1$ (see \cite[Theorem 40.6]{Nag62}), we have $\e_0(\fkm, R) > 1$. 
Let $\underline{x} = x_1, x_2, \ldots , x_d$ be a $C$-system of  parameters  of $R$ such that $\fkq = (x_1, x_2, \ldots , x_d)$ and put $J = \fkq:\fkm$. Now, we put $A = R/(x_1, \ldots, x_{d-2})$.  Using similar arguments as in the proof of Theorem \ref{thmgg} (see \eqref{eq3}), one can show that
$$ 	{\rm sg}(J, R) = \I(\fkq A, A)  + \e_1(JA, A) - \mathcal{N}(A) =h_0(A)- r_0(A)  - r_1(A). $$
On the other hand, it follows from Lemma 3.5 in \cite{OTY21} and Lemma \ref{lmc} that %$\rr(R) \leq r_2(A)$.  Therefore  
$$\rr(R) - \mathcal{N}(R) \leq  r_2(A) - \mathcal{N}(A) = - 2r_1(A) - r_0(A).$$
Since $ \rr(R) - \mathcal{N}(R)\ge {\rm sg}(J, R)$ by hypothesis, we have $h_0(A) = r_1(A) = 0$. Therefore, $A$ is Cohen--Macaulay. Hence  $R$ is Cohen-Macaulay because of Lemma \ref{lemun}. The proof is complete.
\end{proof}
%%%%%%%%%%%%%%%%%%%%%%%%%%%%%%%%%%%%%%%%%%%%%%%%%%%%%%%%%%%%%%%%%

%%%%%%%%%%%%%%%%%%%%%%%%%%%%%%%%%%%%%%%%%%%%%%%%%%%%%%%%%%%%%%%

\begin{cor}\label{corg}
Assume that $R$ is a non-regular unmixed local ring with $d=\dim R \ge 2$. Then for all $C$-parameter ideals $\fkq \subseteq \fkm^{\gr(R)}$ of $R$, we have
$$ \rr(R) - \mathcal{N}(R)\le {\rm sg}(\fkq: \fkm, R) \le {\rm sg}(\fkq, R).$$
Furthermore, each equality occurs if and only if $R$ is Cohen-Macaulay.
\end{cor}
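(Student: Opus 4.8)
The plan is to read off Corollary~\ref{corg} directly from Theorems~\ref{thmgg} and~\ref{thmggn}, after first recording what the relevant invariants become when $R$ is Cohen--Macaulay. As a preliminary I would note that, since $R$ is unmixed and non-regular, $\e_0(\fkm,R)>1$ (otherwise $R$ would be regular by \cite[Theorem 40.6]{Nag62}), so Lemma~\ref{change} is available for every $C$-parameter ideal $\fkq$. Next I would record that if $R$ is Cohen--Macaulay then, by the Remark immediately following Lemma~\ref{fact2}, $\I(\fkq,R)=0$, $\mathcal{N}(R)=r_d(R)=\rr(R)$ and $\e_1(\fkq:\fkm,R)=r_d(R)$, whence ${\rm sg}(\fkq:\fkm,R)=\I(\fkq,R)+\e_1(\fkq:\fkm,R)-\mathcal{N}(R)=0$ by Lemma~\ref{change}, while ${\rm sg}(\fkq,R)=\ell(R/\fkq)-\e_0(\fkq,R)+\e_1(\fkq,R)=0$ as well. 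Thus in the Cohen--Macaulay case all three of $\rr(R)-\mathcal{N}(R)$, ${\rm sg}(\fkq:\fkm,R)$, ${\rm sg}(\fkq,R)$ vanish for every $C$-parameter ideal $\fkq$.

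For the chain of inequalities I would argue by contradiction, fixing a $C$-parameter ideal $\fkq\subseteq\fkm^{\gr(R)}$. If ${\rm sg}(\fkq:\fkm,R)>{\rm sg}(\fkq,R)$, then a fortiori ${\rm sg}(\fkq:\fkm,R)\ge{\rm sg}(\fkq,R)$, so the implication $ii)\Rightarrow i)$ of Theorem~\ref{thmgg} forces $R$ to be Cohen--Macaulay; but then both sides equal $0$ by the preliminary paragraph, a contradiction. Likewise, if $\rr(R)-\mathcal{N}(R)>{\rm sg}(\fkq:\fkm,R)$, then $\rr(R)-\mathcal{N}(R)\ge{\rm sg}(\fkq:\fkm,R)$, so the implication $iii)\Rightarrow i)$ of Theorem~\ref{thmggn} makes $R$ Cohen--Macaulay, and again both sides vanish, a contradiction. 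Hence $\rr(R)-\mathcal{N}(R)\le{\rm sg}(\fkq:\fkm,R)\le{\rm sg}(\fkq,R)$.

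For the equality assertion, the direction "$R$ Cohen--Macaulay $\Rightarrow$ equalities hold (for every $\fkq$)" is exactly the preliminary paragraph. For the converse, an equality ${\rm sg}(\fkq:\fkm,R)={\rm sg}(\fkq,R)$ for some $C$-parameter ideal $\fkq$ in particular gives ${\rm sg}(\fkq:\fkm,R)\ge{\rm sg}(\fkq,R)$, so Theorem~\ref{thmgg} applies; and an equality $\rr(R)-\mathcal{N}(R)={\rm sg}(\fkq:\fkm,R)$ for some $C$-parameter ideal $\fkq\subseteq\fkm^{\gr(R)}$ gives $\rr(R)-\mathcal{N}(R)\ge{\rm sg}(\fkq:\fkm,R)$, so Theorem~\ref{thmggn} applies; either way $R$ is Cohen--Macaulay. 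I do not anticipate any real obstacle here: every step is an application of the two theorems already proven, together with the bookkeeping of the Cohen--Macaulay invariants. The only point requiring care is checking that the hypotheses match — that "$R$ non-regular unmixed with $d\ge 2$" and "$\fkq\subseteq\fkm^{\gr(R)}$" are precisely what Theorems~\ref{thmgg} and~\ref{thmggn} ask for, which is exactly what the corollary assumes.
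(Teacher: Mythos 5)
Your proposal is correct and follows essentially the same route as the paper, which simply cites Theorems~\ref{thmgg} and~\ref{thmggn}; you have just made explicit the contrapositive reading of the implications $ii)\Rightarrow i)$ and $iii)\Rightarrow i)$ together with the vanishing of all three quantities in the Cohen--Macaulay case. The hypothesis bookkeeping (non-regular, unmixed, $d\ge 2$, $\fkq\subseteq\fkm^{\gr(R)}$) matches what the two theorems require, so there is no gap.
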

\begin{proof}
This is now immediately seen from Theorem \ref{thmggn} and Theorem \ref{thmgg}.
\end{proof}
%%%%%%%%%%%%%%%%%%%%%%%%%%%%%%%%%%%%%%%%%%%%%%%%%%%%%%%%%%%%%%%%%%%%%%%%
The following consequence of Theorem \ref{thmggn} provides a characterization of Gorenstein rings.
\begin{cor} \label{corggn}
Assume that $R$ is a non-regular unmixed local ring of dimension   $d=\dim R \ge 2$.    Then the following statements are equivalent.
	\begin{enumerate}
		\item[$i)$] $R$ is Gorenstein.
		\item[$ii)$] There exists a $C$-parameter ideal $\fkq \subseteq \fkm^{\gr(R)}$ of $R$ such that  %For some $C$-parameter ideal $\fkq \subseteq \fkm^{\gr(R)}$ of $R$, we have 
		$${\rm sg}(\fkq: \fkm, R)   \le 1 -  \mathcal{N}(R).$$
	\end{enumerate}
\end{cor}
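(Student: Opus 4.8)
The plan is to deduce this Gorenstein criterion directly from Theorem \ref{thmggn} together with the well-known characterization that a Cohen-Macaulay local ring is Gorenstein if and only if its Cohen-Macaulay type equals $1$, i.e. $\rr(R)=1$. First I would handle $i)\Rightarrow ii)$: if $R$ is Gorenstein then in particular $R$ is Cohen-Macaulay, so by Theorem \ref{thmggn} (specifically the equality case recorded in Corollary \ref{corg}, or directly from Lemma \ref{fact2}) we have $\rr(R)-\mathcal{N}(R)={\rm sg}(\fkq:\fkm,R)$ for every $C$-parameter ideal $\fkq\subseteq\fkm^{\gr(R)}$. Since $R$ is Gorenstein, $\rr(R)=1$, hence ${\rm sg}(\fkq:\fkm,R)=1-\mathcal{N}(R)$, which in particular gives the desired inequality ${\rm sg}(\fkq:\fkm,R)\le 1-\mathcal{N}(R)$ for any such $\fkq$ (they always exist since $R$ is a homomorphic image of a Cohen-Macaulay ring).

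For the converse $ii)\Rightarrow i)$, suppose there is a $C$-parameter ideal $\fkq\subseteq\fkm^{\gr(R)}$ with ${\rm sg}(\fkq:\fkm,R)\le 1-\mathcal{N}(R)$. The key observation is that $\rr(R)\ge 1$ always (the socle of $H^d_\fkm(R)$ is nonzero since $H^d_\fkm(R)\ne 0$ for $d=\dim R$, using that $R$ is unmixed of dimension $d$), so $\rr(R)-\mathcal{N}(R)\ge 1-\mathcal{N}(R)\ge {\rm sg}(\fkq:\fkm,R)$. Thus the hypothesis of Theorem \ref{thmggn} $iii)$ is satisfied, and we conclude that $R$ is Cohen-Macaulay. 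Now that $R$ is Cohen-Macaulay, the equality case of Corollary \ref{corg} forces $\rr(R)-\mathcal{N}(R)={\rm sg}(\fkq:\fkm,R)$; combining with the hypothesis ${\rm sg}(\fkq:\fkm,R)\le 1-\mathcal{N}(R)$ gives $\rr(R)-\mathcal{N}(R)\le 1-\mathcal{N}(R)$, i.e. $\rr(R)\le 1$. Together with $\rr(R)\ge 1$ this yields $\rr(R)=1$, and since $R$ is Cohen-Macaulay this means $R$ is Gorenstein.

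The only point requiring a little care is the inequality $\rr(R)\ge 1$ in the possibly-non-Cohen-Macaulay setting, i.e. that the socle of the top local cohomology $H^d_\fkm(R)$ is nonzero; this holds because $R$ is unmixed of dimension $d$ (so $\dim\widehat R/\fkp=d$ for some — indeed every — minimal prime), whence $H^d_\fkm(R)\ne 0$, and any nonzero Artinian module over the complete local ring $\widehat R$ has nonzero socle. I expect this step to be essentially routine given the standing assumptions, so there is no real obstacle: the corollary is a clean combination of Theorem \ref{thmggn}, the equality statement of Corollary \ref{corg}, and the type-one characterization of Gorensteinness.
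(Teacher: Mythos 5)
Your proposal is correct and follows essentially the same route as the paper: deduce Cohen--Macaulayness from Theorem \ref{thmggn} via the chain ${\rm sg}(\fkq:\fkm,R)\le 1-\mathcal{N}(R)\le \rr(R)-\mathcal{N}(R)$, then use the equality case to force $\rr(R)=1$ and conclude Gorensteinness. You merely make explicit two steps the paper leaves implicit (that $\rr(R)\ge 1$ because $H^d_\fkm(R)\ne 0$ has nonzero socle, and the derivation $\rr(R)\le 1$ from the equality case), both of which are justified as you state them.
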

\begin{proof} 
$i) \Rightarrow ii)$.  Since $R$ is  Gorenstein, we get by Theorem \ref{thmggn} that
$$ {\rm sg}(\fkq: \fkm, R)  = \rr(R) -  \mathcal{N}(R)= 1-  \mathcal{N}(R) $$
for all $C$-parameter ideals $\fkq$ of $R$.\\
$ii) \Rightarrow i)$. We have 
 $${\rm sg}(\fkq: \fkm, R)  \leq 1  -  \mathcal{N}(R)\leq \rr(R) -  \mathcal{N}(R).$$ 
Thus $R$ is Cohen-Macaulay because of Theorem \ref{thmggn}. Moreover, we have $\rr(R) = 1$.  Therefore, $R$ is Gorenstein, 	and this completes the proof.
	\end{proof}
%%%%%%%%%%%%%%%%%%%%%%%%%%%%%%%%%%%%%%%%%%%%%%%%%%%%%%%%%%%%%%%%%%%%%%%%%
\section{On the sectional genera  and the second  Hilbert coefficient}\label{section4}

In this section, we will explore the relationship between the sectional genera  and the second Hilbert coefficient. 
%In \cite{Nor60}, D. G. Northcott gave the  inequality on Hilbert coefficients of $\fkm$-primary ideal,  provided $R$ is Cohen-Macaulay. After that, S. Goto and K. Nishida in \cite{GoS03} generalized  Northcott's inequality. They proved that 
%\begin{equation}\label{eq1}
%	\begin{aligned}
%		e_0(J, R) - \ell_R(R/J) \leq e_1(J, R) - e_1(\fkq, R),
%	\end{aligned}
%\end{equation}
%where $J$ is a  $\fkm$-primary ideal of $R$ and $\fkq$ is a minimal reduction of $J$. The inequality (\ref{eq1}) can be used to characterize  quasi-Buchsbaum rings as follows.

%Next we will study the relationship among the sectional genera  and the second Hilbert coefficients. 

%%%%%%%%%%%%%%%%%%%%%%%%%%%%%%%%%%%%%%%%%%%%%%%%%%%%%%%%%%%%%%%
\begin{thm} \label{thmee}
Assume that $R$ is a non-regular unmixed local ring of dimension   $d=\dim R \ge 2$.   Then the following statements are equivalent.
	\begin{enumerate}[$i)$]
		\item $R$ is Cohen--Macaulay.
		\item There exists a $C$-parameter ideal $\fkq$ of $R$ such that 
		$$ \e_2(\fkq:\fkm, R) \geq \e_2(\fkq, R).$$
%		where $J = \fkq: \m$. 
	\end{enumerate}
\end{thm}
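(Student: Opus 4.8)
The plan is to treat $(i)\Rightarrow(ii)$ as essentially free and to deduce $(ii)\Rightarrow(i)$ by the same reduction to dimension $2$ used for Theorem \ref{thmgg}, this time following the second Hilbert coefficient rather than the sectional genus. For $(i)\Rightarrow(ii)$, if $R$ is a non-regular Cohen--Macaulay ring then by the Remark after Lemma \ref{fact2} one has $\e_2(\fkq:\fkm,R)=0=\e_2(\fkq,R)$ for every $C$-parameter ideal $\fkq$ of $R$, so the inequality in $(ii)$ holds (with equality) for any such $\fkq$.

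For $(ii)\Rightarrow(i)$, fix a $C$-system of parameters $x_1,\dots,x_d$ with $\fkq=(x_1,\dots,x_d)$, set $J=\fkq:\fkm$, $R_i=R/(x_1,\dots,x_i)$ and $A=R_{d-2}$, and reuse the structural facts established in the proof of Theorem \ref{thmgg}: since $R$ is unmixed and non-regular, $\e_0(\fkm,R)>1$, hence $\e_0(\fkm A,A)>1$ and $A$ is non-regular; $A$ is a generalized Cohen--Macaulay ring of dimension $2$; $\fkq A$ is a $C$-parameter ideal of $A$ with $JA=\fkq A:\fkm A$; and the unmixed component of $A$ is $H^0_\fkm(A)$. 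The crux is to carry $\e_2$ down the tower $R\to R_1\to\cdots\to A$. By Lemma \ref{lmsup} $i)$, $\e_2$ is unchanged at every step whose source has dimension $\ge 4$, while the single passage from $R_{d-3}$ (dimension $3$) to $A$ introduces a correction term $(-1)^{3}\ell_R(0:_{R_{d-3}}x_{d-2})$; because this term depends only on $R_{d-3}$ and $x_{d-2}$, it is the same for $\fkq$ and for $J$, hence cancels, giving
\[
\e_2(J,R)-\e_2(\fkq,R)=\e_2(JA,A)-\e_2(\fkq A,A).
\]
(When $d=2$ there is no reduction: $A=R$ and the identity is trivial.)

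Next, apply Lemma \ref{fact2} $iii)$ and $iv)$ to the $2$-dimensional ring $A$, which give $\e_2(JA,A)=\e_2(\fkq A:\fkm A,A)=h_0(A)-r_1(A)$ and $\e_2(\fkq A,A)=h_0(A)$, so the right-hand side of the displayed identity equals $-r_1(A)$. The hypothesis $\e_2(J,R)\ge\e_2(\fkq,R)$ then forces $r_1(A)=0$; since $A$ is generalized Cohen--Macaulay, $H^1_\fkm(A)$ has finite length, so the vanishing of its socle gives $H^1_\fkm(A)=0$. Hence $A/H^0_\fkm(A)$ has depth $\ge 2$, i.e.\ is Cohen--Macaulay of dimension $2$, and Lemma \ref{lemun} $ii)$ yields that $R$ is Cohen--Macaulay; for $d=2$ this last step is vacuous, $A=R$ being already Cohen--Macaulay.

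I expect the main obstacle to be the bookkeeping of the reduction. One has to confirm that $A$ is genuinely generalized Cohen--Macaulay (this is where Lemma \ref{lemun} $i)$ and the unmixedness of $R$ enter) so that the formulas of Lemma \ref{fact2} are legitimate, and --- more delicately --- that the correction term arising in the dimension-$3$-to-dimension-$2$ step does not depend on whether one uses $\fkq$ or $\fkq:\fkm$, which is exactly what makes it cancel in $\e_2(J,R)-\e_2(\fkq,R)$. It is also essential to keep the sign straight in $\e_2(\fkq A,A)=+h_0(A)$: it is precisely this sign that reduces the whole difference to $-r_1(A)$ (hence $\le 0$ in general, with equality exactly when $R$ is Cohen--Macaulay), rather than to an expression of indefinite sign.
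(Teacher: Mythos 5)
Your proposal is correct and follows essentially the same route as the paper's own proof: reduce modulo $x_1,\dots,x_{d-2}$ to the generalized Cohen--Macaulay ring $A$ of dimension $2$, use Lemma \ref{lmsup} to transfer $\e_2$ (with the common correction term $\ell(0:_{R_{d-3}}x_{d-2})$ cancelling in the difference), apply Lemma \ref{fact2} to get $\e_2(J,R)-\e_2(\fkq,R)=-r_1(A)$, and conclude via Lemma \ref{lemun}. Your explicit remarks on the cancellation of the correction term and on the $d=2$ degenerate case are slightly more careful than the paper's write-up, but the argument is the same.
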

\begin{proof} Note that we have $\e_0(\fkm, R) > 1$ since $R$ is to be regular if $R$ is unmixed and $\e_0(\fkm, R) =1$ (see \cite[Theorem 40.6]{Nag62}).  \\
	$i) \Rightarrow ii)$. Since $R$ is Cohen-Macaulay, by Lemma \ref{fact2}, we get that 
	$$ \e_2(\fkq:\fkm, R) = \e_2(\fkq, R) = 0.$$
	$ii) \Rightarrow i)$.  
	Let $\underline{x} = x_1, x_2, \ldots , x_d$ be a $C$-system of parameters  of $R$ such that $\fkq = (x_1, x_2, \ldots , x_d)$ and put $J = \fkq:\fkm$. Note that $J R_i=\fkq R_i:_{R_i} \fkm R_i$, where $R_i = R/(x_1, \ldots, x_i)$ for all $i = 1, \ldots, d-2$. Now, we put $A = R/(x_1, \ldots, x_{d-2})$. We get $ \e_2(\fkq, R) = \e_2(\fkq A, A)$ and $\e_2(J, R) = \e_2(J A, A)$  by Lemma \ref{lmsup}. Note that by Lemma \ref{lmc} $ii)$,  $A/H_{\fkm}^{0}(A)$ is unmixed of dimension $2$. Thus  $A$ is  generalized Cohen--Macaulay because of  Lemma \ref{lemun}. Therefore, by Lemma \ref{lmsup} and \ref{fact2}, we have 
	$$ \e_2(\fkq, R) = \e_2(\fkq A, A) - \ell((0):_{R_{d-3}}x_{d-2}) =  h_0(A) - \ell((0):_{R_{d-3}}x_{d-2}) $$
	and 
	$$\e_2(J, R) = \e_2(J A, A) - \ell((0):_{R_{d-3}}x_{d-2}) = h_0(A) - r_1(A)  - \ell((0):_{R_{d-3}}x_{d-2}).$$
Since  $ \e_2(\fkq:\fkm, R) \geq \e_2(\fkq, R)$, we have  $r_1(A) = 0$. Therefore,  $A/H_{\fkm}^{0}(A)$ is Cohen--Macaulay. Hence  $R$ is Cohen-Macaulay by Lemma \ref{lemun}. The proof is complete.
\end{proof}

%\begin{cor}\label{cor1}
%	Assume that $R$ is unmixed. with $\dim R = d \ge 2$. Then for all $C$-parameter ideal $\fkq$ of $R$, we have
%	$$ e_2(J, R) \leq e_2(\fkq, R),$$
%	where $J = \fkq: \fkm$. 
%\end{cor}

%%%%%%%%%%%%%%%%%%%%%%%%%%%%%%%%%%%%%%%%%%%%%%%%%%%%%%%%%%%%%%%%%%%

\begin{thm}\label{thmgee}
Assume that $R$ is a non-regular unmixed local ring of dimension   $d=\dim R \ge 2$.    Then the following statements are equivalent.
	\begin{enumerate}[$i)$]
		\item $R$ is Cohen--Macaulay.
		\item There exists a $C$-parameter ideal $\fkq \subseteq \fkm^{\gr(R)}$ of $R$ such that  
		$$ \e_2(\fkq : \fkm, R) \ge {\rm sg}(\fkq: \fkm, R) +  \mathcal{N}(R) - \rr(R).$$
		\item There exists a $C$-parameter ideal $\fkq \subseteq \fkm^{\gr(R)}$ of $R$ such that  
		$$ \e_2(\fkq, R) \ge {\rm sg}(\fkq: \fkm, R) +  \mathcal{N}(R) - \rr(R).$$
		\item There exists a $C$-parameter ideal $\fkq \subseteq \fkm^{\gr(R)}$ of $R$ such that  
		$$ \e_2(\fkq, R) \ge {\rm sg}(\fkq, R) +  \mathcal{N}(R) - \rr(R).$$
	\end{enumerate}
%	where $J = \fkq: \fkm$. 
\end{thm}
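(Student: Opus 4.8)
The plan is to run the reduction used for Theorems \ref{thmgg}, \ref{thmggn} and \ref{thmee}: cut by a $C$-system of parameters down to dimension $2$, read off all invariants from Lemma \ref{fact2} (equivalently Remark \ref{rem2.4}), and check that each of $ii)$, $iii)$, $iv)$ forces $r_1(A)=0$ for the two-dimensional reduction $A$, which yields Cohen--Macaulayness through Lemma \ref{lemun}. At the outset I would note that, $R$ being unmixed, $\e_0(\fkm,R)=1$ would make $R$ regular (\cite[Theorem 40.6]{Nag62}), so $\e_0(\fkm,R)>1$ throughout. First I would dispose of $i)\Rightarrow ii),iii),iv)$: if $R$ is Cohen--Macaulay (and non-regular), every $C$-parameter ideal $\fkq$ has $\I(\fkq,R)=0$, $\mathcal{N}(R)=\rr(R)=r_d(R)$, $\e_1(\fkq:\fkm,R)=r_d(R)$ and $\e_2(\fkq,R)=\e_2(\fkq:\fkm,R)=0$, hence by Lemma \ref{change} also ${\rm sg}(\fkq:\fkm,R)={\rm sg}(\fkq,R)=0$; in each of $ii)$--$iv)$ both sides are then $0$ and the inequality holds, with equality.

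For the converse implications $ii)\Rightarrow i)$, $iii)\Rightarrow i)$, $iv)\Rightarrow i)$, I would fix a $C$-system of parameters $\underline x=x_1,\dots,x_d$ with $\fkq=(\underline x)$ and put $R_i=R/(x_1,\dots,x_i)$, $A=R_{d-2}$, $J=\fkq:\fkm$. By Lemma \ref{lmc} the $x_i$ may be taken superficial for $\fkq R_{i-1}$ and $JR_{i-1}$, $JA=\fkq A:_A\fkm A$, and $\mathcal{N}(A)=\mathcal{N}(R)$; by Lemma \ref{lmc} $ii)$ and Lemma \ref{lemun} $i)$ the ring $A$ is two-dimensional generalized Cohen--Macaulay with $\e_0(\fkm A,A)=\e_0(\fkm,R)>1$. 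Lemma \ref{lmsup} gives $\e_i(\fkq,R)=\e_i(\fkq A,A)$, $\e_i(J,R)=\e_i(JA,A)$ for $i\le 1$, ${\rm sg}(\fkq,R)={\rm sg}(\fkq A,A)$, and ${\rm sg}(J,R)=\I(\fkq A,A)+\e_1(JA,A)-\mathcal{N}(A)$ as in \eqref{eq3}; the single non-neutral step, the descent from dimension $3$ to dimension $2$, contributes a correction term $c:=\ell_R\big((0):_{R_{d-3}}x_{d-2}\big)\ge 0$ (with $c=0$ when $d=2$), so $\e_2(\fkq,R)=\e_2(\fkq A,A)-c$ and $\e_2(J,R)=\e_2(JA,A)-c$. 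Substituting the two-dimensional values of Remark \ref{rem2.4} yields
\[
{\rm sg}(J,R)=h_0(A)-r_0(A)-r_1(A),\qquad {\rm sg}(\fkq,R)=h_0(A),
\]
\[
\e_2(\fkq,R)=h_0(A)-c,\qquad \e_2(J,R)=h_0(A)-r_1(A)-c.
\]

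The last ingredient I would bring in is $\rr(R)-\mathcal{N}(R)\le r_2(A)-\mathcal{N}(A)=-r_0(A)-2r_1(A)$, from \cite[Lemma 3.5]{OTY21} and Lemma \ref{lmc} (as in the proof of Theorem \ref{thmggn}). Now $iii)$ rearranges to $\rr(R)-\mathcal{N}(R)\ge c-r_0(A)-r_1(A)$, which with the bound gives $c\le -r_1(A)$, so $c=r_1(A)=0$; $ii)$ rearranges to $\rr(R)-\mathcal{N}(R)\ge c-r_0(A)$, giving $c\le -2r_1(A)$, again $r_1(A)=0$; $iv)$ rearranges to $\rr(R)-\mathcal{N}(R)\ge c\ge 0$, which with the bound forces $r_0(A)=r_1(A)=c=0$. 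In every case $r_1(A)=0$, so $H^1_\fkm(A)$---finitely generated by Lemma \ref{lemun} $i)$ and $\fkm$-torsion, hence of finite length---vanishes. Then $A/H^0_\fkm(A)$ is a two-dimensional Cohen--Macaulay module, and since $H^0_\fkm(A)=U_A(0)$ (because $\Ass A\subseteq\Assh A\cup\{\fkm\}$ by Lemma \ref{lmc} $ii)$), Lemma \ref{lemun} $ii)$ concludes that $R$ is Cohen--Macaulay. Alternatively one could prove $iii)\Rightarrow i)$ only, and obtain $iv)\Rightarrow iii)$ from Corollary \ref{corg} (${\rm sg}(\fkq:\fkm,R)\le{\rm sg}(\fkq,R)$) and $ii)\Rightarrow iii)$ from $\e_2(\fkq:\fkm,R)=\e_2(\fkq,R)-r_1(A)\le\e_2(\fkq,R)$, both of which are visible in the formulas above.

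The genuinely delicate point is the bookkeeping of $c$: since Lemma \ref{lmsup} $i)$ preserves $\e_j$ only for $j\le s-2$, the dimension-$3$-to-$2$ step is not length-neutral for $\e_2$, so $c$ enters both $\e_2(\fkq,R)$ and $\e_2(J,R)$, and the work is to verify that each of the three inequalities either cancels $c$ or pins it, along with $r_1(A)$, to zero. The only other non-automatic step is checking that $A$ (more precisely $A/H^0_\fkm(A)$) is generalized Cohen--Macaulay so that Lemma \ref{fact2} applies, and this is exactly where the unmixedness of $R$ and Lemma \ref{lemun} $i)$ enter.
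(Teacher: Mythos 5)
Your proposal is correct and follows essentially the same route as the paper: cut by $x_1,\dots,x_{d-2}$ down to the two-dimensional generalized Cohen--Macaulay ring $A$, express ${\rm sg}(\fkq,R)$, ${\rm sg}(\fkq:\fkm,R)$, $\e_2(\fkq,R)$, $\e_2(\fkq:\fkm,R)$ through $h_0(A)$, $r_0(A)$, $r_1(A)$ and the correction term $\ell\big((0):_{R_{d-3}}x_{d-2}\big)$ via Lemmas \ref{lmsup} and \ref{fact2}, invoke $\rr(R)-\mathcal{N}(R)\le r_2(A)-\mathcal{N}(A)$ from \cite[Lemma 3.5]{OTY21}, force $r_1(A)=0$, and finish with Lemma \ref{lemun}. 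The only (harmless) organizational difference is that you verify $ii)\Rightarrow i)$, $iii)\Rightarrow i)$ and $iv)\Rightarrow i)$ each directly by the same computation, whereas the paper proves only $iii)\Rightarrow i)$ and funnels $ii)$ and $iv)$ into $iii)$ through Theorems \ref{thmee} and \ref{thmgg} --- an alternative you yourself note at the end.
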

\begin{proof}  $i) \Rightarrow ii)$ and $i) \Rightarrow iv)$ are immediately seen from the assumption that $R$ is Cohen-Macaulay and Lemma  \ref{fact2}.  
Moreover 	$ii) \Rightarrow iii)$ and $iv) \Rightarrow iii)$ was established in Theorem \ref{thmee}.
Now we show $iii) \Rightarrow i)$.   Since $R$ is to be regular if $R$ is unmixed and $\e_0(\fkm, R) =1$ (see \cite[Theorem 40.6]{Nag62}), we have $\e_0(\fkm, R) > 1$.   

Let $\underline{x} = x_1, x_2, \ldots , x_d$ be a $C$-system of parameters  of $R$ such that $\fkq = (x_1, x_2, \ldots , x_d)$ and put $J = \fkq:\fkm$. Set $A = R/(x_1, \ldots, x_{d-2})$. Using similar arguments as in the proof of Theorem \ref{thmgg} and \ref{thmee}, one can show that $A$ is  generalized Cohen--Macaulay. Furthermore, we also obtain that  
		$${\rm sg}(J, R) = \I(\fkq A, A)  + \e_1(JA, A) - \mathcal{N}(A),$$
	and 
$$ \e_2(\fkq, R) = \e_2(\fkq A, A) - \ell((0):_{R_{d-3}}x_{d-2}) = h_0(A)  - \ell((0):_{R_{d-3}}x_{d-2}).$$		
On the other hand, by  Lemma 3.5 in \cite{OTY21}, we have $r_2(R) \geq \rr(R)$. Since $A$ is generalized Cohen--Macaulay,  it follows from Lemma \ref{fact2} that 
$${\rm sg}(J, R) +  \mathcal{N}(R) -\rr(R) \ge h_0(A) + r_1(A).$$
Since $\e_2(\fkq, R) \ge {\rm sg}(J, R) +  \mathcal{N}(R) -\rr(R) $, we have  $ r_1(A) = 0$. Therefore $A/H_{\fkm}^{0}(A)$ is Cohen--Macaulay. Hence  $R$ is Cohen-Macaulay because of Lemma \ref{lmc}. The proof is complete.
\end{proof}

\begin{cor}\label{corge}
Assume that $R$ is a non-regular unmixed local ring of dimension   $d=\dim R \ge 2$. Then for all $C$-parameter ideal $\fkq \subseteq \fkm^{\gr(R)}$ of $R$, we have
	$$\e_2(\fkq:\fkm, R) \leq \e_2(\fkq, R) \leq {\rm sg}(\fkq: \fkm, R) +  \mathcal{N}(R) - \rr(R) \leq {\rm sg}(\fkq, R) +  \mathcal{N}(R) - \rr(R).$$
\end{cor}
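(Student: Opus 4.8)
The plan is to deduce the chain of three inequalities one link at a time, each from the corresponding characterization of this section together with the elementary dichotomy ``$R$ is Cohen-Macaulay, or it is not''. Fix a $C$-parameter ideal $\fkq\subseteq\fkm^{\gr(R)}$ of $R$; note that $\e_0(\fkm,R)>1$ since $R$ is non-regular and unmixed, so that Lemma \ref{change} is available.

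For the outer inequality $\e_2(\fkq:\fkm,R)\le\e_2(\fkq,R)$: if $R$ is Cohen-Macaulay then both sides are $0$ by the invariant computations recorded (after Lemma \ref{fact2}) for the Cohen-Macaulay case; and if $R$ is not Cohen-Macaulay, the implication $ii)\Rightarrow i)$ of Theorem \ref{thmee} shows that no $C$-parameter ideal satisfies $\e_2(\fkq:\fkm,R)\ge\e_2(\fkq,R)$, so in particular $\fkq$ satisfies the strict inequality. The middle inequality $\e_2(\fkq,R)\le{\rm sg}(\fkq:\fkm,R)+\mathcal{N}(R)-\rr(R)$ is obtained in the same way from Theorem \ref{thmgee}: in the Cohen-Macaulay case Lemma \ref{change} together with those invariant values gives $\e_2(\fkq,R)=0$, ${\rm sg}(\fkq:\fkm,R)=0$ and $\mathcal{N}(R)=\rr(R)=r_d(R)$, so both sides vanish; in the non-Cohen-Macaulay case the implication $iii)\Rightarrow i)$ of Theorem \ref{thmgee} forces the strict inequality for every $C$-parameter ideal contained in $\fkm^{\gr(R)}$, hence for $\fkq$. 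Finally, the last inequality is just ${\rm sg}(\fkq:\fkm,R)\le{\rm sg}(\fkq,R)$ --- the right-hand inequality of Corollary \ref{corg}, equivalently Theorem \ref{thmgg} --- with the constant $\mathcal{N}(R)-\rr(R)$ added to both sides.

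I do not expect any genuine obstacle: the corollary is a formal consequence of Theorems \ref{thmgg}, \ref{thmee}, \ref{thmgee} and Corollary \ref{corg}. The only point requiring care is the bookkeeping around the ``there exists a $C$-parameter ideal such that $\dots$'' phrasing of Theorems \ref{thmee} and \ref{thmgee}: to conclude the displayed inequality for the particular $\fkq$ at hand one passes to the contrapositive in the non-Cohen-Macaulay case, which promotes the non-existence clause to a \emph{strict} inequality valid for every $C$-parameter ideal, and one must separately note that all three inequalities degenerate to equalities when $R$ is Cohen-Macaulay --- which is exactly what makes the chain compatible with the sharpness assertions of Theorems \ref{thmgg}, \ref{thmee} and \ref{thmgee}.
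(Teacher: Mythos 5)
Your proof is correct and follows essentially the same route as the paper, which simply derives the chain as an immediate formal consequence of the equivalences in Theorems \ref{thmee}, \ref{thmgee} and \ref{thmgg} (splitting into the Cohen-Macaulay case, where all quantities vanish, and the non-Cohen-Macaulay case, where the contrapositives give the strict inequalities). Your expanded bookkeeping of the ``there exists'' clauses is exactly what the paper's one-line proof leaves implicit.
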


\begin{proof}
This is now immediately seen from Theorem \ref{thmgee}. 
\end{proof}

%%%%%%%%%%%%%%%%%%%%%%%%%%%%%%%%%%%%%%%%%%%%%%%%%%%%%%%%%%%%%%%
\begin{thm} \label{thmge}
Assume that $R$ is a non-regular unmixed local ring of dimension   $d=\dim R \ge 2$.   Then the following statements are equivalent.
	\begin{enumerate}[$i)$]
		\item $R$ is Cohen--Macaulay.
		\item There exists a $C$-parameter ideal $\fkq$ such that 
		$$ \e_2(\fkq:\fkm, R) \geq {\rm sg}(\fkq, R).$$
%		where $J = \fkq: \m$. 
	\end{enumerate}
\end{thm}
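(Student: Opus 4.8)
The plan is to mimic the reduction-to-dimension-two strategy used in the proofs of Theorem \ref{thmgg}, Theorem \ref{thmggn}, and Theorem \ref{thmee}, and then compare the two sides using the explicit formulas of Lemma \ref{fact2} in the $2$-dimensional generalized Cohen-Macaulay case (Remark \ref{rem2.4}). Note first that $\e_0(\fkm, R) > 1$, since otherwise $R$ would be regular by \cite[Theorem 40.6]{Nag62}, contrary to hypothesis. For the direction $i) \Rightarrow ii)$, assume $R$ is Cohen-Macaulay; then by Lemma \ref{fact2} (or the Remark following it) we have $\e_2(\fkq:\fkm, R) = 0$ and ${\rm sg}(\fkq, R) = 0$ for every $C$-parameter ideal $\fkq$, so the inequality $\e_2(\fkq:\fkm, R) \ge {\rm sg}(\fkq, R)$ holds (with equality) for any such $\fkq$.

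For the direction $ii) \Rightarrow i)$, let $\underline{x} = x_1, \dots, x_d$ be a $C$-system of parameters with $\fkq = (x_1, \dots, x_d)$, put $J = \fkq : \fkm$, and set $A = R/(x_1, \dots, x_{d-2})$. By Lemma \ref{lmc}, the image of $x_{d-1}, x_d$ is a $C$-system of parameters of $A$ with $JA = \fkq A :_A \fkm A$ and $\mathcal{N}_R(A) = \mathcal{N}(R)$; by Lemma \ref{lmc} $ii)$ together with Lemma \ref{lemun} $i)$, $A$ is a generalized Cohen-Macaulay ring of dimension $2$ with $\e_0(\fkm A, A) = \e_0(\fkm, R) > 1$. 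Using Lemma \ref{lmsup} repeatedly to descend from $R$ to $A$ (picking up the terms $\ell((0):_{R_{i-1}} x_i)$ exactly as in the proof of Theorem \ref{thmee}), I would compute
\[
\e_2(\fkq:\fkm, R) = \e_2(JA, A) - \ell((0):_{R_{d-3}} x_{d-2}) = h_0(A) - r_1(A) - \ell((0):_{R_{d-3}} x_{d-2})
\]
(using Remark \ref{rem2.4} $iii)$ for $\e_2(JA, A) = h_0(A) - r_1(A)$), and, by the computation already performed in the proof of Theorem \ref{thmgg} (see \eqref{eq1}, \eqref{eq3}),
\[
{\rm sg}(\fkq, R) = {\rm sg}(\fkq A, A) + \ell((0):_{R_{d-3}} x_{d-2}) = h_0(A) + \ell((0):_{R_{d-3}} x_{d-2}),
\]
where I have used Lemma \ref{lmsg} $ii)$ to track the socle-length correction when passing the last superficial element, and Remark \ref{rem2.4} to evaluate ${\rm sg}(\fkq A, A) = \I(\fkq A, A) + \e_1(\fkq A, A) = (h_0(A) + h_1(A)) + (-h_1(A)) = h_0(A)$.

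Substituting into the hypothesis $\e_2(\fkq:\fkm, R) \ge {\rm sg}(\fkq, R)$ gives
\[
h_0(A) - r_1(A) - \ell((0):_{R_{d-3}} x_{d-2}) \ \ge\ h_0(A) + \ell((0):_{R_{d-3}} x_{d-2}),
\]
hence $r_1(A) + 2\ell((0):_{R_{d-3}} x_{d-2}) \le 0$, forcing $r_1(A) = 0$ (and, in passing, $\ell((0):_{R_{d-3}} x_{d-2}) = 0$). Then $H^1_\fkm(A) = 0$ since $r_1(A) = \ell((0):_{H^1_\fkm(A)} \fkm) = 0$ and $H^1_\fkm(A)$ is finitely generated (Lemma \ref{lemun} $i)$), so $A/H^0_\fkm(A)$ is Cohen-Macaulay of dimension $2$; by Lemma \ref{lemun} $ii)$ (applied with $M = R$, so that $N/(x_1,\dots,x_{d-2})R$ is the unmixed component of $A$ and $M/N \cong A/H^0_\fkm(A)$ is Cohen-Macaulay) we conclude that $R$ is Cohen-Macaulay.

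The only delicate point is the bookkeeping of the length corrections $\ell((0):_{R_{i-1}} x_i)$ coming from Lemma \ref{lmsup}: for $\e_2$ these appear at each descent step $s \ge 3$ (the $s-1 = 2$ term), but the crucial observation — already exploited in Theorem \ref{thmee} — is that the \emph{same} correction term appears on both sides so long as one compares $\e_2(\fkq:\fkm, R)$ against $\e_2(\fkq, R)$; here we are comparing against ${\rm sg}(\fkq, R)$ instead, whose correction from Lemma \ref{lmsg} $ii)$ is $+\ell((0):_{R_{d-3}} x_{d-2})$ (only in the final step from dimension $3$ to $2$) rather than $-\ell((0):_{R_{d-3}} x_{d-2})$, and the sign works in our favour. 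I expect this sign-tracking through the iterated superficial-element reductions to be the main obstacle, but it is entirely routine given Lemmas \ref{lmsup} and \ref{lemun}; everything else is a direct substitution of the formulas in Remark \ref{rem2.4}.
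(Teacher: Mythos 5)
Your overall strategy is exactly the paper's: reduce to the two-dimensional generalized Cohen--Macaulay ring $A=R/(x_1,\dots,x_{d-2})$, evaluate both sides via Lemma \ref{fact2}/Remark \ref{rem2.4}, and climb back up with Lemma \ref{lemun}. The one place you go wrong is the step you yourself flag as the ``delicate point'': the correction term in Lemma \ref{lmsg} $ii)$ appears only when the module being cut down has dimension $s=2$ (i.e.\ when passing from dimension $2$ to dimension $1$), whereas the last reduction in your chain is applied to $R_{d-3}$, which has dimension $3$. So there is no socle-length correction for the sectional genus at all, and the correct identity is ${\rm sg}(\fkq, R) = {\rm sg}(\fkq A, A) = h_0(A)$, exactly as in \eqref{eq1} and \eqref{eq3} of the proof of Theorem \ref{thmgg}; your extra summand $+\ell((0):_{R_{d-3}}x_{d-2})$ is spurious. (By contrast, your treatment of $\e_2$ is right: there the case $j=s-1$ of Lemma \ref{lmsup} $i)$ does kick in at the dimension-$3$-to-$2$ step, producing the $-\ell((0):_{R_{d-3}}x_{d-2})$.)

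Fortunately the error is harmless for the conclusion: with the corrected identity the hypothesis gives $h_0(A)-r_1(A)-\ell((0):_{R_{d-3}}x_{d-2}) \ge h_0(A)$, hence $r_1(A)+\ell((0):_{R_{d-3}}x_{d-2})\le 0$, and both terms are nonnegative, so $r_1(A)=0$ just as before. Your subsequent deduction that $H^1_\fkm(A)=0$ (using finite generation from Lemma \ref{lemun} $i)$), that $A/H^0_\fkm(A)$ is Cohen--Macaulay, and the lift to $R$ via Lemma \ref{lemun} $ii)$ all match the paper. So the proof is salvageable verbatim once you delete the phantom correction term; but as written the displayed formula for ${\rm sg}(\fkq,R)$ is false in general, and you should not present a derivation of the target conclusion from an incorrect intermediate identity.
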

\begin{proof} 
	$i) \Rightarrow ii)$. Since $R$ is Cohen-Macaulay,  by Lemma  \ref{fact2} we have 	$ \e_2(\fkq:\fkm, R) = {\rm sg}(\fkq, R) = 0.$\\
	$ii) \Rightarrow i)$. Using similar arguments as in the proof of Theorem \ref{thmgg}, we obtain that  $\e_0(\fkm , R) > 1$.
	  Let $\underline{x} = x_1, x_2, \ldots , x_d$ be a $C$-system of parameters  of $R$ such that $\fkq = (x_1, x_2, \ldots , x_d)$. 
	 We put $A = R/(x_1, \ldots, x_{d-2})$. By Lemma \ref{lmc} $ii)$,  $A/H_{\fkm}^{0}(A)$ is unmixed with $\dim A = 2$. Thus  $A$ is  generalized Cohen--Macaulay because of  Lemma \ref{lemun}. Moreover, it is follows from Lemma \ref{lmsup} that  $\e_0(\fkm A, A) = \e_0(\fkm , R) > 1$. Note that $J R_i=\fkq R_i:_{R_i} \fkm R_i$, where $R_i = R/(x_1, \ldots, x_i)$ for all $i = 1, \ldots, d-2$.  By Lemma \ref{lmsup} and   \ref{fact2}, we get  that
		$$\e_2(J, R) = \e_2(J A, A) - \ell((0):_{R_{d-3}}x_{d-2}) = h_0(R) - r_1(A)  - \ell((0):_{R_{d-3}}x_{d-2})$$
	and 
	$${\rm sg}(\fkq, R) = {\rm sg}(\fkq A, A) =h_0(R).$$	
	Since $ \e_2(J, R) \geq {\rm sg}(\fkq, R)$, we have 
 $h_0(R) = r_1(A) = 0$. Therefore $A$ is Cohen--Macaulay. Hence  $R$ is Cohen-Macaulay because of Lemma \ref{lmc}. The proof is complete.
\end{proof}
%%%%%%%%%%%%%%%%%%%%%%%%%%%%%%%%%%%%%%%%%%%%%%%%%%%%%%%%%%%%%%%%%%%%%%%%%%%%

\begin{cor}\label{cor3}
	Assume that $R$ is a non-regular unmixed local ring and  $d=\dim R \ge 2$. Then for all $C$-parameter ideal $\fkq$ of $R$, we have
	$$ \e_2(\fkq:\fkm, R) \leq {\rm sg}(\fkq, R).$$
\end{cor}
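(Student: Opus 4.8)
The plan is to deduce this directly from Theorem \ref{thmge} by a contrapositive argument, so that essentially no new computation is needed. Fix a $C$-parameter ideal $\fkq$ of $R$ and suppose, toward a contradiction, that $\e_2(\fkq:\fkm, R) > {\rm sg}(\fkq, R)$. Then in particular condition $(ii)$ of Theorem \ref{thmge} is satisfied (with this very $\fkq$), and hence $R$ is a Cohen--Macaulay local ring.

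Next I would invoke the Cohen--Macaulay case recorded in the Remark following Lemma \ref{fact2}: since $R$ is Cohen--Macaulay of dimension $d \ge 2$ and $\fkq$ is a $C$-parameter ideal, we have $\e_2(\fkq:\fkm, R) = 0$, and also $\e_i(\fkq, R) = 0$ for all $1 \le i \le d$ together with $\I(\fkq, R) = 0$, so that ${\rm sg}(\fkq, R) = \ell(R/\fkq) - \e_0(\fkq, R) + \e_1(\fkq, R) = \I(\fkq, R) + \e_1(\fkq, R) = 0$ as well. Thus $\e_2(\fkq:\fkm, R) = {\rm sg}(\fkq, R) = 0$, contradicting the strict inequality assumed above. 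Therefore no $C$-parameter ideal can satisfy $\e_2(\fkq:\fkm, R) > {\rm sg}(\fkq, R)$, which is exactly the claimed inequality $\e_2(\fkq:\fkm, R) \le {\rm sg}(\fkq, R)$ for every $C$-parameter ideal $\fkq$ of $R$.

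The only point that requires a little care is that Theorem \ref{thmge} is stated as an existence assertion (``there exists a $C$-parameter ideal $\fkq$ such that\ldots''), so one should genuinely argue by contradiction as above rather than trying to apply it to a fixed $\fkq$ directly. There is no real obstacle here: all the substantive work — reduction to the two-dimensional generalized Cohen--Macaulay ring $A = R/(x_1,\ldots,x_{d-2})$ via a $C$-system of parameters, the behaviour of $\e_2$ and ${\rm sg}$ under cutting by superficial elements (Lemma \ref{lmsup}), and the explicit formulas of Lemma \ref{fact2} — has already been carried out in the proof of Theorem \ref{thmge}, so the corollary is indeed ``immediately seen''.
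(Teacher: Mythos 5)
Your proposal is correct and matches the paper's intended argument: the paper's proof of Corollary~\ref{cor3} is simply ``immediately seen from Theorem~\ref{thmge},'' and the way to see it is exactly your contrapositive reading --- a violating $\fkq$ would force $R$ to be Cohen--Macaulay via $(ii)\Rightarrow(i)$, whereupon $\e_2(\fkq:\fkm,R)={\rm sg}(\fkq,R)=0$ contradicts strictness. Your care in noting that the theorem is an existence statement, so one must argue by contradiction (or split into the Cohen--Macaulay and non-Cohen--Macaulay cases), is exactly the right point.
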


\begin{proof}
This is now immediately seen from Theorem \ref{thmge}.
\end{proof}

%
%\begin{thm} \label{thmge1}
%Assume that $R$ is a non-regular unmixed local ring of dimension   $\dim R \ge 2$.   Then the following statements are equivalent.
%	\begin{enumerate}[$i)$]
%		\item $R$ is quasi-Buchsbaum.
%		\item There exists a $C$-parameter ideal $\fkq$ such that 
%		$$ {\rm sg}(\fkq:\m, R) = \e_1(\fkq, R).$$
%		
%	\end{enumerate}
%\end{thm}
For the reader’s convenience, we recall the notion of quasi-Buchsbaum modules. We say that an $R$-module $M$ is said to be a {\it quasi-Buchsbaum module} if $\fkm H^i_\fkm(M)=0$  for all $i< \dim M$. 
With this notation, we are now in a position to prove the second main theorem in this study.

\begin{proof}[Proof of Theorem \ref{main2}]
	Since $R$ is to be regular if $R$ is unmixed and $\e_0(\fkm, R) =1$, we have $\e_0(\fkm, R) > 1$. \\
	$i) \Rightarrow ii)$. Let $\fkq$ be a $C$-parameter ideal of $R$. By lemma \ref{change}, we have
	$${\rm sg}(\fkq:\fkm, R) = \I(\fkq , R) +  \e_1(\fkq:\fkm, R) - \mathcal{N}(R).$$
Since $R$ is quasi-Buchsbaum, $r_i(R) = h_i(R)$ for all $i < d$. By Lemma \ref{fact2},  we get that 
$${\rm sg}(\fkq:\fkm, R)
		=    - \displaystyle\sum_{j=1}^{d-1} \binom{d-2}{j-1} h_j(R) = \e_1(\fkq, R).$$
	$ii) \Rightarrow i)$. 
Let $\underline{x} = x_1, x_2, \ldots , x_d$ be a $C$-system of parameters  of $R$ such that $\fkq = (x_1, x_2, \ldots , x_d)$ and put $J = \fkq:\fkm$. Now, we put $A = R/(x_1, \ldots, x_{d-2})$. 
Using similar arguments as in the proof of Theorem \ref{thmgg}, one can show that 
$$ 	{\rm sg}(J, R) =  \I(\fkq A, A)  + \e_1(JA, A) - \mathcal{N}(A) =  - r_1(A) + h_0(R) - r_0(A) $$
and 
$$\e_1(\fkq, R) =  \e_1(\fkq A, A) = - h_1(R) %= \ell((0) :_{H_{\m}^{1}(A)} x_{d-2}).
	.$$
	Since  ${\rm sg}(J, R) = \e_1(\fkq, R)$, we have
 $r_1(A) = h_1(R)$ and so $h_0(R) = r_0(A)$. Thus $A$ is quasi-Buchsbaum. It follows from a $C$-system of parameters  forms a $d$-sequence of $R$ and Theorem 3.6 in \cite{Suz87} that  $R$ is quasi-Buchsbaum, as required.	
\end{proof}
%%%%%%%%%%%%%%%%%%%%%%%%%%%%%%%%%%%%%%%%%%%%%%%%%%%%%%%%%%%%%%%%%%%55

%%%%%%%%%%%%%%%%%%%%%%%%%%%%%%%%%%%%%%%%%%%%%%%%%%%%%%%%%%%%%%%%%%%%%%%

We close this paper with the following example, which shows that Theorem \ref{thmee}, \ref{thmgee},
and \ref{thmge} are not true without the assumption that $R$ is unmixed.

\begin{ex}
	Let $S = k[[X, Y, Z, W ]]$ be the
	formal power series ring over a field $k$. Put
	$R = S/[(X, Y, Z) \cap ( W )]$. Then 
	\begin{enumerate}[$i)$] 
		\item $\dim R=3$ and $R$ is not unmixed. Hence, $R$ is not a Cohen-Macaulay ring.
		\item We have 
		$$\e_2(\fkq:\fkm, R) = \e_2(\fkq, R) = {\rm sg}(\fkq: \fkm, R) +  \mathcal{N}(R) - \rr(R) =  {\rm sg}(\fkq, R)$$
		for all  $C$-parameter ideals $\fkq$ in $R$.
	\end{enumerate}
	\begin{proof}
		
		It is standard to check that $\dim R=3$ and $R$ is not unmixed. It follows that $R$ is not a Cohen-Macaulay ring. We put $A = S/(W)$ and $B =S/(X, Y, Z)$. Note that $A$ and $B$ are Gorenstein rings. 
		
		Let $\fkq$ be a $C$-parameter ideal of $R$ and put $J = \fkq : \fkm$. Then $J^n = \fkq^n : \fkm$ for all $n\geq 0$ (see \cite[Lemma 2.6]{OTY21}). From the exact sequence $0\to B \to R \to A \to 0$ and $A$ is Cohen-Macaulay, we observe that 
		\[
		0 \to B/\fkq^{n+1}B \to R/\fkq^{n+1} \to A/\fkq^{n+1}A \to 0. 
		\]
		%$$\xymatrix{0\ar[r]&B/\fkq^{n+1}B\ar[r]&R/\fkq^{n+1}R\ar[r]&A/\fkq^{n+1}A\ar[r]&0.}$$
		 By applying the functor
		$\mathrm{Hom}_R(R/\fkm, -)$ to the above exact sequence, by Lemma 2.6 in \cite{OTY21}, we obtain the following:
		\[
		0 \to [\fkq^{n+1}:_B\fkm]/\fkq^{n+1}B \to [\fkq^{n+1}:_R\fkm]/\fkq^{n+1} \to [\fkq^{n+1}:_A\fkm]/\fkq^{n+1}A \to 0
		\]
		%$$\xymatrix{0\ar[r]&[\fkq^{n+1}:_B\fkm]/\fkq^{n+1}\ar[r]&[\fkq^{n+1}:_R\fkm]/\fkq^{n+1}\ar[r]&[\fkq^{n+1}:_A\fkm]/\fkq^{n+1}\ar[r]&0}.$$
		Therefore, we have
		$$\begin{aligned}
			\ell_R(R/\fkq^{n+1})&=\ell_R(A/\fkq^{n+1}A)+\ell_R(B/\fkq^{n+1}B)\\
			&=\ell_R(A/\fkq A)\binom{n+3}{3} + \ell_R(B/\fkq B) \binom{n+1}{1}
		\end{aligned}$$
		and
		$$\begin{aligned}
			\ell_R([\fkq^{n+1}:_R\fkm]/\fkq^{n+1})&=\ell_R([\fkq^{n+1}:_A\fkm]/\fkq^{n+1})+\ell_R([\fkq^{n+1}:_B\fkm]/\fkq^{n+1})\\
			&=\binom{n+1}{2}+ 1
		\end{aligned}$$
		for all integers $n \ge 0$.   Since $\ell_R(R/I^{n+1}) =  \ell_R(R/\fkq^{n+1}) - \ell_R([\fkq^{n+1}:_R\fkm]/\fkq^{n+1})$, we have  $\e_0(\fkq, R) = \e_0(\fkq: \fkm, R) = \ell_R(A/\fkq A), \e_1(\fkq, R) = 0, \e_1(\fkq:\fkm, R)=1$ and $\e_2(\fkq, R) = \e_2(\fkq :\fkm, R) = \ell_R(B/\fkq B)$. Thus we have 
		$$  {\rm sg}(\fkq, R) = \ell(R/\fkq) - \e_0(\fkq,R) + \e_1(\fkq, R) = \ell_R(B/\fkq B).$$
		On the other hand, it is easily observed from $0\to B \to R \to A \to 0$ that $r_0(R) = r_2(R) = 0$ and $r_1(R) = r_3(R) =1$. 
		Since $R$ is sequentially Cohen-Macaulay, it follows from Theorem 1.1 in \cite{Tru13} that $\mathcal{N}(R) = \sum\limits_{i=0}^{3}r_i(R) = 2$. Hence, by Lemma \ref{change}, we have
		$$  {\rm sg}(\fkq : \fkm, R) = \ell(R/\fkq) - \e_0(\fkq,R)   + \e_1(\fkq:\fkm, R) - \mathcal{N}(R) = \ell_R(B/\fkq B) -1.$$
		Consequently, we obtain that
		$$\e_2(\fkq:\fkm, R) = \e_2(\fkq, R) = {\rm sg}(\fkq: \fkm, R) +  \mathcal{N}(R) - \rr(R) =  {\rm sg}(\fkq, R) = \ell_R(B/\fkq B).$$  
		The proof is complete.
	\end{proof}
\end{ex}

%%%%%%%%%%%%%55555555555555555555555555555555555555555555555555555555555

%\bibliography{bi1}

% \bib, bibdiv, biblist are defined by the amsrefs package.
\begin{bibdiv}
\begin{biblist}

\bib{BrH98}{book}{
      author={Bruns, Winfried},
      author={Herzog, H.~J{\"u}rgen},
       title={{Cohen-Macaulay Rings}},
     edition={2},
      series={Cambridge Studies in Advanced Mathematics},
   publisher={Cambridge University Press},
        date={1998},
}

\bib{CPR05}{article}{
      author={Corso, A.},
      author={Polini, C.},
      author={Rossi, M.~E.},
       title={Hilbert coefficients of integrally closed ideals},
        date={2005},
     journal={J. Pure Appl. Algebra},
      volume={201},
       pages={126\ndash 141},
}

\bib{CuC18}{article}{
	author={Cuong, N.T.},
	author={Cuong, D.T.},
	title={{Local cohomology annihilators and Macaulayfication}},
	date={2018},
	journal={Acta Math. Vietnam.},
	volume={42},
	pages={37\ndash 60},
}

\bib{CuQ20}{article}{
      author={Cuong, N.T.},
      author={Quy, P.H.},
       title={{On the index of reducibility of parameter Ideals: the Stable and
  Limit Values}},
        date={2020},
     journal={Acta Math. Vietnam.},
      volume={45},
       pages={249\ndash 260},
}

\bib{CQT19}{article}{
      author={Cuong, N.~T.},
      author={Quy, P.~H.},
      author={Truong, H.~L.},
       title={The index of reducibility of powers of a standard parameter
  ideal},
        date={2019},
     journal={J. Algebra Appl.},
      volume={18},
       pages={1950048, 17 pp,},
}

\bib{CST78}{article}{
      author={Cuong, N.T.},
      author={Schenzel, P.},
      author={Trung, N.V.},
       title={{Verallgemeinerte Cohen--Macaulay-Moduln}},
        date={1978},
     journal={Math. Nachr.},
      volume={85},
       pages={57\ndash 73},
}

\bib{CuT08}{article}{
      author={Cuong, N.~T.},
      author={Truong, H.~L.},
       title={{Asymptotic behaviour of parameter ideals in generalized
  Cohen-Macaulay module}},
        date={2008},
     journal={J. Algebra},
      volume={320},
       pages={158\ndash 168},
}

\bib{GoN01}{article}{
      author={Goto, S.},
      author={Nakamura, Y.},
       title={Multiplicity and tight closures of parameters},
        date={2001},
     journal={J. Algebra},
      volume={244},
       pages={302\ndash 311},
}

\bib{GoN03}{article}{
      author={Goto, S.},
      author={Nishida, K.},
       title={{Hilbert coefficients and Buchsbaumness of associated graded
  rings}},
        date={2003},
     journal={J. Pure Appl. Algebra},
      volume={181},
       pages={61\ndash 74},
}

\bib{GoO16}{article}{
      author={Goto, S.},
      author={Ozeki, K.},
       title={Hilbert coefficients of integrally closed ideals},
        date={2016},
     journal={J. Algebra},
      volume={446},
       pages={58\ndash 76},
}

\bib{GoS03}{article}{
      author={Goto, S.},
      author={Sakurai, H.},
       title={The equality {$I^{2} = QI$} in {B}uchsbaum rings},
        date={2003},
     journal={Rend. Semin. Mat. Univ. Padova},
      volume={110},
       pages={25\ndash 56},
}

\bib{Hun82}{article}{
      author={Huneke, Craig},
       title={The theory of d-sequences and powers of ideals},
        date={1982},
        ISSN={0001-8708},
     journal={Adv. Math.},
      volume={46},
      number={3},
       pages={249\ndash 279},
  url={https://www.sciencedirect.com/science/article/pii/0001870882900457},
}

\bib{Hun87}{article}{
      author={Huneke, C.},
       title={Hilbert functions and symbolic powers},
        date={1987},
     journal={Michigan Math. J.},
      volume={34},
       pages={293\ndash 318},
}

\bib{Ito95}{article}{
      author={Itoh, S.},
       title={Hilbert coefficients of integrally closed ideals},
        date={1995},
     journal={J. Algebra},
      volume={176},
       pages={638\ndash 652},
}

\bib{MoQ17}{article}{
      author={Morales, M.},
      author={Quy, P.},
       title={{A Study of the Length Function of Generalized Fractions of
  Modules}},
        date={2017},
     journal={Proc. Edinb. Math. Soc. (2)},
      volume={60},
      number={3},
       pages={721\ndash 737},
}

\bib{Nag62}{book}{
      author={Nagata, M.},
       title={Local rings},
   publisher={Interscience New York},
        date={1962},
}

\bib{Noe21}{article}{
      author={Noether, E.},
       title={Idealtheorie in ringbereichen},
        date={1921},
     journal={Math. Ann.},
      volume={83},
       pages={24\ndash 66},
}

\bib{Nor60}{article}{
      author={Northcott, D.G.},
       title={{A note on the coefficients of the abstract Hilbert function}},
        date={1960},
     journal={J. London Math. Soc.},
      volume={35},
       pages={209\ndash 214},
}

\bib{Ooi87}{article}{
      author={Ooishi, A.},
       title={{$\Delta$}-genera and sectional genera of commutative rings},
        date={1987},
     journal={Hiroshima Math. J.},
      volume={17},
       pages={361\ndash 327},
}

\bib{OTY21}{article}{
      author={Ozeki, K.},
      author={Truong, H.~L.},
      author={Yen, H.~N.},
       title={{On Hilbert coefficients and sequentially Cohen-Macaulay rings}},
        date={2021},
     journal={Proc. Amer. Math. Soc.},
}

\bib{Sal92}{article}{
      author={Sally, J.D.},
       title={Hilbert coefficients and reduction number 2},
        date={1992},
     journal={J. Algebraic Geom.},
      volume={1},
       pages={325\ndash 333},
}

\bib{Sal93}{article}{
      author={Sally, J.D.},
       title={{Ideals whose Hilbert function and Hilbert polynomial agree at $n
  = 1$}},
        date={1993},
     journal={J. Algebra},
      volume={157},
       pages={534\ndash 547},
}

\bib{Sch82}{book}{
      author={Schenzel, P.},
       title={{Dualisierende komplexe in der lokalen algebra und Buchsbaum
  ringe}},
   publisher={Lect. Notes in Math.},
     address={Springer Verlag Berlin - Heidelberg - New York},
        date={1982},
}

\bib{Suz87}{article}{
      author={Suzuki, N.},
       title={On quasi-buchsbaum modules. an application of theory of
  flc-modules},
        date={1987},
     journal={Adv. Stud. Pure Math.},
      volume={11},
       pages={215\ndash 243},
}

\bib{SwH06}{book}{
      author={Swanson, I.},
      author={Huneke, C.},
       title={Integral closure of ideals, rings, and modules},
   publisher={Cambridge University Press},
        date={2006},
}

\bib{TaT20}{article}{
      author={Tam, N. T.~T.},
      author={Truong, H.~L.},
       title={A note on {C}hern coefficients and {C}ohen-{M}acaulay rings},
        date={2020},
     journal={Arkiv f{\"o}r Matematik},
      volume={58},
      number={1},
       pages={197\ndash 212},
}

\bib{Tru13}{article}{
      author={Truong, H.~L.},
       title={Index of reducibility of distinguished parameter ideals and
  sequentially Cohen-Macaulay modules},
        date={2013},
     journal={Proc. Amer. Math. Soc.},
      volume={141},
       pages={1971\ndash 1978},
}

\bib{Tru19}{article}{
      author={Truong, H.~L.},
       title={The eventual index of reducibility of parameter ideals and the
  sequentially Cohen-Macaulay property},
        date={2019},
     journal={Arch. Math. (Basel)},
      volume={112},
      number={5},
       pages={475\ndash 488},
}

\bib{Tru86}{article}{
      author={Trung, N.V.},
       title={{Toward a theory of generalized Cohen-Macaulay modules}},
        date={1986},
     journal={Nagoya Math. J},
      volume={102},
       pages={1\ndash 49},
}

\bib{Vas08}{article}{
      author={Vasconcelos, W.~V.},
       title={The chern cofficient of local rings},
        date={2008},
     journal={Michigan Math. J.},
      volume={57},
       pages={725\ndash 713},
}

\end{biblist}
\end{bibdiv}

\end{document}